\let\reftagform@=\tagform@
\def\tagform@#1{\maketag@@@{(\ignorespaces\textcolor{purple}{#1}\unskip\@@italiccorr)}}
\renewcommand{\eqref}[1]{\textup{\reftagform@{\ref{#1}}}}
\DeclareUrlCommand\ULurl@@{%
  \def\UrlLeft{\uline\bgroup}%
  \def\UrlRight{\egroup}}
\def\ULurl@#1{\hyper@linkurl{\ULurl@@{#1}}{#1}}
\DeclareRobustCommand*\ULurl{\hyper@normalise\ULurl@}
\def\lessim{\ \lower4pt\hbox{$
		\buildrel{\displaystyle <}\over\sim$}\ }
\def\gessim{\ \lower4pt\hbox{$\buildrel{\displaystyle >}
		\over\sim$}\ }
\newcommand{\e}{\mathbb{E}}
\newcommand{\p}{\mathbb{P}}
\newcommand{\indi}{\ensuremath{\boldsymbol 1}}
\newtheorem{lemma}{\bf Lemma}
\newtheorem{theorem}{\bf Theorem}
\newtheorem*{prediction}{\bf Prediction}
\newtheorem{proposition}{\bf Proposition}
\theoremstyle{remark}
\newtheorem{remark}{\bf Remark}
\newcommand{\8}{\infty}
\newcommand{\rz}{\mathbb{R}}
\newcommand{\px}{\mathcal{P}}
\newcommand{\al}{\alpha}
\newcommand{\sign}{{\rm sign}}
\newcommand{\ux}{\mathcal{U}}
\newcommand{\ga}{\gamma}
\newenvironment{Proof of lemma}{\noindent{\bf Proof of Lemma}}{\hfill$\Box$\newline}
\newenvironment{Proof of theorem}{\noindent{\bf Proof of Theorem}}{\hfill{\footnotesize${\square}$}\newline}
\newenvironment{Proof of theorems}{\noindent{\bf Proof of Theorems}}{\hfill$\Box$\newline}
\newenvironment{Proof of proposition}{\noindent{\bf Proof of Proposition}}{\hfill$\Box$\newline}
\newenvironment{Proof of propositions}{\noindent{\bf Proof of Propositions}}{\hfill$\Box$\newline}
\newenvironment{Proof of exercise}{\noindent{\it Proof of Exercise:}}{\hfill$\Box$}
\begin{document}
\title{The SK model is infinite step replica symmetry breaking \\
	at zero temperature}
\author{Antonio Auffinger \thanks{Department of Mathematics, Northwestern University, tuca@northwestern.edu, research partially supported by NSF Grant CAREER DMS-1653552 and NSF Grant DMS-1517894.} \\
\small{Northwestern University}\and Wei-Kuo Chen \thanks{School of Mathematics, University of Minnesota. Email: wkchen@umn.edu, research partially supported by NSF Grant DMS-1642207, NSF Grant Career DMS-1752184, and Hong Kong Research Grants Council GRF-14302515.}
\\
\small{University of Minnesota}
\and Qiang Zeng \thanks{Department of Mathematics, Northwestern University, qzeng.math@gmail.com.}\\\
\small{Northwestern University}
}
\date{February 4, 2020}








\maketitle   





\begin{abstract} We prove that the Parisi measure of the mixed $p$-spin model at zero temperature has infinitely many points in its support. This establishes  Parisi's  prediction  that the functional order parameter of the Sherrington-Kirkpatrick model is not a step function at zero temperature.
	As a consequence,  we show that the number of levels of broken replica symmetry in the Parisi formula of the free energy  diverges as the temperature goes to zero.
\end{abstract}

\section{Introduction and main results}

\subsection{The Model and Parisi's Solution}
The study of  mean field spin glass models is a very rich and important part of theoretical physics \cite{Montanaribook, MPV, Parisibook}. For mathematicians, it is a challenging program \cite{T03,T11,P13}. Roughly speaking, the main goal is to study the global maxima or, more generally, the ``largest individuals" of a stochastic process with ``high-dimensional" correlation structure.

The classic example of such a process is the mixed $p$-spin model. Its Hamiltonian (or energy) $H_N$ is defined on the spin configuration space $\Sigma_N=\{-1,1\}^N$ by
\begin{align*}
H_N(\sigma)&=X_N(\sigma)+h\sum_{i=1}^N\sigma_i
\end{align*}
for
$$X_N(\sigma):=\sum_{p\geq 2}c_pX_{N,p}(\sigma).$$
Here, the processes $X_{N,p}$'s are pure $p$-spin Hamiltonians defined as
\begin{align*}
X_{N,p}(\sigma)=\frac{1}{N^{(p-1)/2}}\sum_{1\leq i_1,\ldots,i_p\leq N}g_{i_1,\ldots,i_p}\sigma_{i_1}\cdots\sigma_{i_p},
\end{align*}
where $g_{i_1,\ldots,i_p}$ are i.i.d. standard normal variables for all $1\leq i_1,\ldots,i_p\leq N$ and $p\geq 2.$ The real sequence $(c_p)_{p\geq 2}$ satisfies $\sum_{p\geq 2}2^pc_p^2<\infty.$
The parameter $h\in\mathbb{R}$ denotes the strength of the external field. By definition, the covariance of $X_N$ can be computed as
\begin{equation*}
\mathbb E X_{N}(\sigma^1)X_N(\sigma^2)=N\xi(R_{1,2}),
\end{equation*}
where
\begin{equation*}
\xi(s):=\sum_{p\geq 2}c_p^2s^p
\end{equation*}
and $$R_{1,2}=R(\sigma^{1}, \sigma^{2}):= \frac{1}{N}\sum_{i=1}^N\sigma_i^1\sigma_i^2$$ is the normalized inner product between $\sigma^1$ and $\sigma^2$, known as the overlap.
The covariance structure of $X_N$ is as rich as the structure of the metric space  $(\Sigma_N, d)$, where $d$ is the normalized Hamming distance on $\Sigma_{N}$,
$$
d(\sigma^{1},\sigma^{2}) = \frac{1-R(\sigma^{1},\sigma^{2})}{2}.
$$

The problem of computing the maximum energy (or the ground state energy) of $H_N$ as $N$ diverges is a rather nontrivial task.  Standard statistical mechanics deals with this problem by considering  the Gibbs measure
\begin{equation*}
G_{N,\beta} (\sigma) = \frac{1}{Z_{N,\beta}} e^{\beta H_N(\sigma)}
\end{equation*} and the free energy
\begin{align*}
F_{N,\beta}=\frac{1}{\beta N}\log Z_{N,\beta},
\end{align*}
where $Z_{N,\beta}$ is the partition function of $H_N$ defined as
\begin{align*}
Z_{N,\beta} = \sum_{\sigma\in \Sigma_{N}} e^{ \beta H_N(\sigma)}.
\end{align*}
The parameter $\beta=1/(kT)>0$ is called the inverse temperature, where $k$ is the Boltzmann constant and $T$ is the absolute temperature. The main goal in this approach is to try to describe the large $N$ limit of the sequences of the free energies $F_{N,\beta}$ and the Gibbs measures $G_{N,\beta}$. When the temperature $T$ decreases, large values of $H_N$ become more important (to both the partition function $Z_{N,\beta}$ and to the Gibbs measure $G_{N,\beta}$) and they prevail over the more numerous smaller values.  Since $H_{N}$ is a high-dimensional correlated field with a large number of points near its global maximum, this question becomes very challenging, especially for small values of $T$.

When $\xi(s)=s^{2}/2$ and $h=0$, the model above is the famous Sherrington-Kirkpatrick (SK) model introduced in \cite{SK}, as a mean field modification of the Edwards-Anderson model \cite{EA}. Using a non-rigorous replica trick and a replica symmetric hypothesis, Sherrington and Kirkpatrick \cite{SK} proposed a solution to the limiting free energy of the SK model. Their solution however was incomplete; an alternative solution was proposed  in 1979 in a series of ground-breaking articles by Giorgio Parisi \cite{Pa79,Parisi, Pa80,Pa83}, where it was foreseen that:
\begin{enumerate}
\item[$(i)$] The limiting free energy is given by a variational principle, known as the Parisi formula,
\item[$(ii)$] The Gibbs measures  are asymptotically ultrametric,
\item[$(iii)$] At low  enough temperature, the  symmetry of replicas is broken infinitely many times.
\end{enumerate}

The first two predictions were confirmed in the past decade. Following the beautiful discovery of Guerra's broken replica symmetry scheme \cite{Guerra}, the Parisi formula was proved in the seminal work of Talagrand \cite{Talagrand} in 2006 under the convexity assumption of $\xi$. Later, in 2012, the ultrametricity conjecture was established by Panchenko \cite{Panch} assuming the validity of the extended Ghirlanda-Guerra identities \cite{GG}. These identities are known to be valid for the SK model with an asymptotically vanishing perturbation and for generic models  without any perturbation. Here, the model is said to be generic if the span of $\{1\}\cup \{s^p: c_p\neq 0\}$ is dense in the space of continuous functions $C[-1,1]$ under the maximum norm. As a consequence of ultrametricity, the Parisi formula was further extended to all mixed $p$-spin models by Panchenko \cite{P05} utilizing the Aizenman-Sims-Starr scheme \cite{ASS}.

More precisely, the Parisi formula for general mixtures is stated as follows. Denote by $\mathcal{M}$ the collection of all cumulative distribution functions $\alpha$ on $[0,1]$ and by $\alpha(d s)$ the probability induced by $\alpha$. For $\alpha\in\mathcal{M}$, define
\begin{align}\label{pf}
\mathcal{P}_{\beta}(\alpha)=\frac{\log 2}{\beta}+\Psi_{\alpha,\beta}(0,h)-\frac{1}{2}\int_0^1\beta\alpha(s)s\xi''(s)d s,
\end{align}
where $\Psi_{\alpha,\beta}(t,x)$ is the weak solution to the following nonlinear parabolic PDE,
\begin{align*}
\partial_t\Psi_{\alpha,\beta}(t,x)&=-\frac{\xi''(t)}{2}\bigl(\partial_{xx}\Psi_{\alpha,\beta}(t,x)+\beta\alpha(t)(\partial_x\Psi_{\alpha,\beta}(t,x))^2\bigr)
\end{align*}
for $(t,x)\in[0,1)\times\mathbb{R}$ with boundary condition
$$
\Psi_{\alpha,\beta}(1,x)=\frac{\log \cosh \beta x}{\beta}.
$$
For the existence and regularity of $\Psi_{\alpha,\beta}$, we refer the readers to \cite{ParisiMeasure,JT2}. The  Parisi formula \cite{Talagrand} states that
\begin{align}\label{eq0}
F_\beta:=\lim_{N\rightarrow\infty}F_{N,\beta}&=\inf_{\alpha\in\mathcal{M}}\mathcal{P}_\beta(\alpha) \,\,\,\,a.s.
\end{align}
The infinite dimensional variational problem on the right-hand side of \eqref{eq0} has a unique minimizer \cite{ParisiMeasure}, denoted by $\alpha_{P,\beta}$. The measure $\alpha_{P,\beta}(d t)$ induced by $\alpha_{P,\beta}$ is known as the Parisi measure \cite{MPV}\footnote{The Parisi measure is the inverse of the functional order parameter $q(x)$ in \cite{Parisi}, sometimes written as $x(q)$.}. In Parisi's solution, it is predicted that the Parisi measure is the limiting distribution of the overlap $R(\sigma^1,\sigma^2)$ under the measure $\mathbb E G_N^{\otimes 2}$. More importantly, this together with the asymptotic ultrametricity implies that the spin configurations under the Gibbs measure form a hierarchical clustering structure and the number of levels therein is determined by the number of points in the support of the Parisi measure $\alpha_{P,\beta}(dt)$. The importance of $(iii)$ lies on the fact that it indicates the phase transition of the model between high and low temperature regimes. While at high temperature the clusters contain no layers, at low temperature they begin to possess clustering structures with multiple layers. The statement $(iii)$ advocates the existence of infinitely many layers within the clusters at low enough temperature.  In a nutshell, the Parisi measure is the key ingredient of the matter that describes the structure of the Gibbs measure as well as the free energy of the system. See \cite{MPV,P13} for detailed discussion.

The importance of the Parisi measure leads to the following classification. If a Parisi measure $\alpha_{P,\beta}(d t)$ is a Dirac measure, we say that the model is replica symmetric (RS). For $k\geq 1$, we say that  the model has $k$ levels of replica symmetry breaking ($k$-RSB) if the Parisi measure is atomic and has exactly $k+1$ jumps. If the Parisi measure is neither RS nor $k$-RSB for some $k\geq 1,$ then the model has infinite levels of replica symmetry breaking ($\infty$-RSB).  We will also say that the model is at least $k$-RSB if the Parisi measure contains at least $k+1$ distinct values in its support.

It is expected that when $\infty$-RSB happens, the functional order parameter has a non-empty interval in its support. This prediction in the physics literature is named as full-step replica symmetry breaking (FRSB). This prediction plays an inevitable role in Parisi's original solution of the SK model. It can be written as:

\begin{prediction}[Parisi]\label{con:ds}
For any $\xi$ and $h$, there exists a critical inverse temperature $\beta_{c}>0$ such that for any $\beta > \beta_{c}$,  the mixed $p$-spin model is FRSB.
\end{prediction}


\vspace{0.1cm}

\subsection{Main Results}
In this paper, we establish this prediction at zero temperature. To prepare for the statement of our main result, we recall the Parisi formula for the ground state energy of $H_N$ as follows. First of all, the Parisi formula allows us to compute the ground state energy of the model by sending the temperature $T$ to zero,
\begin{align}
\label{eq1}
GSE := \lim_{N\to \8} \max_{\sigma \in \Sigma_N} \frac{H_N(\sigma)}N=\lim_{\beta \rightarrow\infty}F_{\beta}=\lim_{\beta \rightarrow\infty}\inf_{\alpha\in\mathcal{M}}\mathcal{P}_\beta(\alpha),
\end{align}
where the validity of the first equality can be found, for instance, in Panchenko's book \cite[Chapter 1]{P13}. In physics literature, it is a convention that the ground state energy is defined as the minimum of the Hamiltonian.  This is indeed equivalent to our formulation here since the disorder coefficients $g_{i_1,\ldots,i_p}$'s are symmetric with respect to the origin. Recently, the analysis of the $\beta$-limit of the second equality was carried out in Auffinger-Chen \cite{AC16} and it was discovered that the ground state energy can be written as a Parisi-type formula. Let $\ux$ denote the collection of all cumulative distribution functions $\gamma$ on $[0,1)$ induced by any measures on $[0,1)$ and satisfying $\int_0^1\ga(t) d t<\8$. Denote by $\gamma(d t)$ the measure that induces $\gamma$ and endow $\ux$ with the $L^1(d t)$-distance. For each $\ga\in \mathcal{U}$, consider the weak solution to the Parisi PDE,
\begin{align*}
\partial_t \Psi_\ga(t,x) = -\frac{\xi''(t)}2 \bigl(\partial_{xx} \Psi_\ga(t,x)+\ga(t) (\partial_x \Psi_\ga(t,x))^2\bigr)
\end{align*}
for $(t,x)\in[0,1)\times \rz$ with boundary condition
\[
\Psi_\ga(1,x) = |x|.
\]
One may find the existence and regularity properties of this PDE solution in \cite{CHL16}. The Parisi functional at zero temperature is given by
\begin{equation*}
\px(\ga) = \Psi_\ga(0,h) -\frac12 \int_0^1 t\xi''(t)\ga(t) d t.
\end{equation*}
Auffinger and Chen \cite{AC16} proved that the maximum energy can be computed through
\begin{equation}\label{eq:par_fml}
GSE=\inf_{\ga\in \ux} \px(\ga)\quad a.s.
\end{equation}
We call this variational representation the Parisi formula at zero temperature. It was proved in \cite{CHL16} that this formula has a unique minimizer which is denoted by $\ga_P.$ We call $\ga_P(d t)$ the Parisi measure at zero temperature. We say that the model is $\infty$-RSB at zero temperature if $\ga_P(d t)$ contains infinitely many points in its support. Our first main result is a proof of Parisi's $\infty$-RSB prediction at zero temperature. I


\begin{theorem}\label{th:main}
For any $\xi$ and $h,$ the mixed $p$-spin model at zero temperature is $\infty$-RSB. 
\end{theorem}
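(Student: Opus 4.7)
The plan is to argue by contradiction. Suppose $\gamma_P$ has only finitely many atoms, $\operatorname{supp}(d\gamma_P)=\{q_1<q_2<\cdots<q_r\}\subset[0,1)$ for some $r\ge 0$, so that $\gamma_P$ takes a constant value $c\in[0,\infty)$ on the rightmost interval $[q_r,1)$ (taking $q_0:=0$ in the replica symmetric case $r=0$).

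I would first recall the first-order characterization of the unique Parisi minimizer, analogous to \cite{ParisiMeasure,AC16,CHL16}. Let $(Y_t)_{t\in[0,1]}$ be the strong solution to
\[ dY_t=\xi''(t)\gamma_P(t)\,\partial_x\Psi_{\gamma_P}(t,Y_t)\,dt+\sqrt{\xi''(t)}\,dB_t,\qquad Y_0=h, \]
and put $M_t:=\partial_x\Psi_{\gamma_P}(t,Y_t)$ and $A(t):=\mathbb{E}[M_t^2]$. Differentiating the Parisi PDE in $x$ and applying It\^o's formula shows that $M_t$ is a martingale with $|M_t|\le 1$ and $M_1=\sign(Y_1)$, whence $A$ is nondecreasing on $[0,1]$ with $A(1)=1$ and $A'(t)=\xi''(t)\mathbb{E}\bigl[(\partial_{xx}\Psi_{\gamma_P}(t,Y_t))^2\bigr]$. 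By convexity of $\px$ on $\ux$, the minimality of $\gamma_P$ is equivalent to the KKT condition
\[ B(s):=\int_s^1 \xi''(t)\bigl(A(t)-t\bigr)\,dt\ge 0\text{ for all }s\in[0,1],\qquad B(s)=0\text{ on }\operatorname{supp}(d\gamma_P),\]
which in particular forces $B(q_i)=0$ for $i=1,\dots,r$ and $B(1)=0$.

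The second step exploits the fact that on $[q_r,1]$ the profile $\gamma_P\equiv c$ is constant, so the Cole-Hopf substitution $\Phi=e^{c\Psi_{\gamma_P}}$ (with the $c\to 0$ limit used when $r=0$ and $c=0$) reduces the Parisi PDE to a linear backward heat equation with diffusion $\xi''(t)$, whose explicit solution with terminal data $\Phi(1,x)=e^{c|x|}$ gives
\[ \Psi_{\gamma_P}(t,x)=\tfrac{1}{c}\log\mathbb{E}\bigl[e^{c|x+W_t|}\bigr],\qquad W_t\sim\mathcal{N}(0,\sigma^2(t)),\quad \sigma^2(t):=\xi'(1)-\xi'(t).\]
A direct differentiation yields
\[ \partial_{xx}\Psi_{\gamma_P}(t,x)=\frac{2\,\phi_{W_t}(-x)}{\mathbb{E}\bigl[e^{c|x+W_t|}\bigr]}+c\bigl(1-(\partial_x\Psi_{\gamma_P}(t,x))^2\bigr),\]
in which the first summand is a Gaussian bump of width $\sigma(t)$ and peak of order $\sigma(t)^{-1}$ centered at $x=0$, while the second is uniformly bounded by $c$. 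This is precisely the singular footprint of the kink of the terminal data $|x|$ at the origin.

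The core of the argument, and its principal obstacle, is to show that this singular term forces $A(t)<t$ just below $1$, contradicting the KKT inequality via the sign of $B$. Standard regularity for the Parisi PDE (cf.\ \cite{CHL16,JT2}) together with the non-degeneracy $\xi''>0$ on $(0,1)$ guarantees that the law of $Y_t$ admits a smooth density that is strictly positive at $x=0$ as $t\uparrow 1$. A Laplace-type estimate on the squared identity above then yields
\[ \mathbb{E}\bigl[(\partial_{xx}\Psi_{\gamma_P}(t,Y_t))^2\bigr]\sim\frac{C_0}{\sigma(t)}\sim\frac{C_0}{\sqrt{\xi''(1)(1-t)}}\quad\text{as }t\uparrow 1,\]
for a positive constant $C_0$, so that $A'(t)\sim C_1(1-t)^{-1/2}$ with $C_1>0$. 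Integrating from $t$ up to $1$,
\[ 1-A(t)=\int_t^1 A'(s)\,ds\sim 2C_1\sqrt{1-t},\qquad\text{hence}\qquad A(t)-t\sim\sqrt{1-t}\bigl(\sqrt{1-t}-2C_1\bigr)<0\]
for $t$ sufficiently close to $1$. Consequently $B'(s)=-\xi''(s)(A(s)-s)>0$ on a left neighborhood of $s=1$, which together with $B(1)=0$ forces $B(s)<0$ just below $s=1$, contradicting $B\ge 0$. This rules out every finite $r\ge 0$ and proves that $\operatorname{supp}(d\gamma_P)$ is infinite. The delicate technical point is to make the Laplace estimate rigorous uniformly in $c\in[0,\infty)$ and to rule out any degeneration of the density of $Y_t$ near the origin as $t\uparrow 1$; the latter is controlled by the explicit drift structure of the SDE combined with the boundedness $|\partial_x\Psi_{\gamma_P}|\le 1$.
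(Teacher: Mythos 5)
Your argument is essentially correct, but it takes a genuinely different route from the paper. You invoke the first-order (KKT) characterization of the zero-temperature minimizer --- $B(s)=\int_s^1\xi''(t)\bigl(A(t)-t\bigr)\,dt\ge 0$ for all $s$, with $A(t)=\e\bigl[(\partial_x\Psi_{\gamma_P}(t,Y_t))^2\bigr]$ --- and then show that constancy of $\gamma_P$ on the last interval makes the Gaussian-bump term $2\varphi_{\sigma(t)}(x)/\e e^{c|x+W_t|}$ in $\partial_{xx}\Psi$ (the paper's $\Gamma$, whose limit produces $\Delta(x)$) blow up so that $A'(t)\gtrsim (1-t)^{-1/2}$, hence $1-A(t)\gtrsim\sqrt{1-t}\gg 1-t$, $A(t)<t$, and $B(s)<0$ near $1$; in other words, an \emph{infinitesimal} extra jump near $1$ already lowers $\mathcal P$. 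The paper never uses a first-order criterion: it directly exhibits a lowering perturbation with a jump of \emph{large} height $m_{n+1}$ (Theorem \ref{thm3}), via Lemma \ref{lem2} and the finite lower bound of Proposition \ref{add:prop2}; note that in Lemma \ref{add:lem3} the paper deliberately discards the term $4\Gamma^2$, which is exactly the $(b-t)^{-1/2}$-divergent term your estimate keeps --- dropping it is what forces $m_{n+1}$ to be large there, while retaining it is what lets you avoid large jumps and also gives directly the strengthening stated in the remark after the proof of Theorem \ref{th:main} (no flatness of $\gamma_P$ near $1$). The trade-off is that your route leans on two imported ingredients that you only sketch: (i) the Gateaux-derivative formula and the resulting KKT condition for $\mathcal P$ on the cone $\mathcal U$ at zero temperature (with the nonsmooth boundary datum $|x|$), which is not established in this paper and must be sourced from, or re-derived with, the regularity and control-representation machinery of \cite{CHL16}; and (ii) the uniform positive lower bound on the density of $Y_t$ near $0$ as $t\uparrow 1$, which is indeed unproblematic under your finite-atom hypothesis since the drift $\xi''\gamma_P\,\partial_x\Psi$ is bounded (Girsanov or Aronson-type bounds), but needs to be written out, together with the It\^o-isometry/Fatou step identifying $A(1^-)-A(t)$ with $\int_t^1\xi''(s)\e[(\partial_{xx}\Psi(s,Y_s))^2]\,ds$. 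With those points secured, your contradiction is valid and yields the theorem.
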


\begin{remark}
The above theorem is a first step towards the validation of Parisi's FRSB Prediction. At zero temperature (and at sufficiently low temperatures), it is expected that the support of the Parisi measure contains an interval. This remains as an important open question.
\end{remark}

Similar to the role of the Parisi measure at positive temperature played in describing the behavior of the model, the Parisi measure at zero temperature also has its own relevance in understanding the energy landscape of the Hamiltonian around the maximum energy.
\begin{proposition}\label{cor1}
For any $\xi$ and $h,$ if $u$ lies in the support of $\gamma_P(d t)$, then for any $\varepsilon,\eta>0$, there exists some constant $K>0$ independent of $N$ such that
\begin{align}\label{eq4}
\p\Bigl(\exists\sigma^1,\sigma^2\,\,\mbox{such that}\,\,R_{1,2}\in (u-\varepsilon,u+\varepsilon)\,\,\mbox{and}\,\,\frac{H_N(\sigma^1)}{N},\frac{H_N(\sigma^2)}{N}\geq GSE-\eta\Bigr)\geq 1- Ke^{-\frac{N}{K}}
\end{align}
for all $N\geq 1.$
\end{proposition}
This means that for any $u\in\mbox{supp } \gamma_P$,  one can always find two spin configurations around the maximum energy such that their overlap is near $u$ with overwhelming probability. Recently, it was proved in \cite{CHL16} that when $h=0$ and the model is even, i.e., $c_p=0$ for all odd $p$, the system exhibits the so-called multiple peaks feature in the sense that there exist exponentially many near maximizers and they are nearly orthogonal to each other. Theorem \ref{th:main} and Proposition \ref{cor1} together add a more fruitful structure to the landscape of the Hamiltonian. In view of Proposition \ref{cor1}, it would be of great interest to understand the size of the number of the pairs $(\sigma^1,\sigma^2)$ with $R_{1,2}\in (u-\varepsilon,u+\varepsilon)$ and $H_N(\sigma^1),H_N(\sigma^2)\geq N(GSE-\eta)$ for a typical realization.

\begin{remark}
	\rm The relationship between the Parisi solution and combinatorial optimization problems has been investigated since the 1980s, see \cite{MPV} for references. The problem of computing the maximum energy is also generally known as the Dean's problem and is frequently used to motivate the theory of mean field spin glasses, see \cite{P13,MPV}. More recently, the formula \eqref{eq1} at zero temperature has appeared in other optimization problems related to theoretical computer science such as extremal cuts on sparse random graphs, see \cite{DMS,Sen} and the references therein. In the physics literature, the FRSB prediction is also discussed in \cite{MontanariP,OS1,OS2}. \end{remark}

\begin{remark}
	\rm For the mixed $p$-spin model defined on the sphere, $\{\sigma\in \mathbb{R}^N: \sum_{i=1}^N\sigma_i^2=N\}$, the corresponding Parisi-type formula at both positive and zero temperatures is much simpler than \eqref{pf} and \eqref{eq1} as there is no PDE involved. In this setting, it is known in \cite{Tal06, AChen13, AC17,AZ,JT17} that the Parisi variational formulas can be explicitly solved for some choices of the mixture parameters $(c_p)_{p\geq 2}$ and algebraic conditions on $\xi$ were presented so that the model has Parisi measures of RS, 1RS, 2RSB, and FRSB. The study of the energy landscapes also has appeared in \cite{AB, ABC, AC17,JT17,Subag,Subag2,Subag3}.

Theorem 1 and Proposition 1 indicate that the spin configurations around the maximum energy are not simply clustered into equidistant groups. This is in sharp contrast to the energy landscape of the spherical version of the mixed $p$-spin model, where in the pure $p$-spin model, i.e., $\xi(t) = t^p$ for $p \ge 3$, it was shown by Subag \cite{Subag} that around the maximum energy, the spin
configurations are essentially orthogonally structured. This structure was also presented in more general mixtures of the spherical model in the recent work of Auffinger and Chen \cite{AC17}.
\end{remark}

We now return to the positive temperature case. Recall the Parisi measure $\alpha_{P,\beta}$ introduced in \eqref{eq0}. Our second main result, as a consequence of Theorem \ref{th:main}, shows that for any mixture parameter $\xi$ and external field $h$, the number of levels of replica symmetry breaking must diverge as $\beta$ goes to infinity.

\begin{theorem}\label{thm:finiteTemp} Let $k\geq 1.$ For any $\xi$ and $h$, there exists $\beta_{k}$ such that the mixed $p$-spin model is at least $k$-RSB for all $\beta > \beta_{k}.$\end{theorem}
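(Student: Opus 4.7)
The plan is a proof by contradiction using Theorem~\ref{th:main}. Suppose Theorem~\ref{thm:finiteTemp} fails for some $k\ge 1$. Then there exists a sequence $\beta_n\to\infty$ along which the positive-temperature Parisi measure $\alpha_{P,\beta_n}$ has support of size at most $k$, i.e., as a cumulative distribution function on $[0,1]$ it has at most $k$ jumps at some locations $q_1^{(n)}<\cdots<q_{k_n}^{(n)}$ with $k_n\le k$. Define the rescaling $\gamma_n(t):=\beta_n\alpha_{P,\beta_n}(t)$, which restricts to $[0,1)$ as an element of $\ux$ and retains the same $\le k$ jump locations.

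The heart of the proof is the convergence $\gamma_n\to\gamma_P$ in $L^1_{\rm loc}([0,1),dt)$. Under the substitution $\gamma=\beta\alpha$, the Parisi PDE defining $\mathcal{P}_\beta$ rewrites as
\begin{align*}
\partial_t\Psi=-\frac{\xi''(t)}{2}\bigl(\partial_{xx}\Psi+\gamma(t)(\partial_x\Psi)^2\bigr),\qquad \Psi(1,x)=\beta^{-1}\log\cosh(\beta x),
\end{align*}
and the corresponding functional becomes $\mathcal{P}_\beta(\gamma/\beta)=\beta^{-1}\log2+\Psi(0,h)-\tfrac12\int_0^1\gamma(s)s\xi''(s)\,ds$. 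Since $\beta^{-1}\log\cosh(\beta x)\to|x|$ uniformly on compacts of $\rz$, combining with continuous dependence of the Parisi PDE solution on the driving function $\gamma$ and on the terminal data from \cite{CHL16}, one deduces that for any sequence $\gamma_n\in\ux$ with $\gamma_n\to\gamma_\infty$ in $L^1_{\rm loc}([0,1))$ one has $\px(\gamma_\infty)\le\liminf_n\mathcal{P}_{\beta_n}(\gamma_n/\beta_n)$. Applying this with $\mathcal{P}_{\beta_n}(\alpha_{P,\beta_n})=F_{\beta_n}\to GSE$, together with a tightness argument producing an a priori bound on $\|\gamma_n\|_{L^1([0,1-\delta])}$ for every $\delta>0$, any subsequential limit $\gamma_\infty$ satisfies $\px(\gamma_\infty)\le GSE$; by the uniqueness of the minimizer $\gamma_P$ of $\px$ established in \cite{CHL16}, $\gamma_\infty=\gamma_P$ and the whole sequence converges.

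To finish, pass to a further subsequence along which each $q_i^{(n)}$ converges to some $q_i^{(\infty)}\in[0,1]$. For any open interval $(a,b)\subset[0,1)$ disjoint from $\{q_1^{(\infty)},\dots,q_k^{(\infty)}\}$, $\gamma_n$ is eventually constant on $(a,b)$ (it has no jump there for large $n$), so $\gamma_n\equiv c_n$ on $(a,b)$; the $L^1$-convergence of $\gamma_n$ to $\gamma_P$ on $(a,b)$ forces $c_n$ to be Cauchy, hence $c_n\to c$, and $\gamma_P\equiv c$ on $(a,b)$. By monotonicity, the support of $\gamma_P(dt)$ is then contained in the finite set $\{q_1^{(\infty)},\dots,q_k^{(\infty)}\}\cap[0,1)$, directly contradicting the FRSB conclusion of Theorem~\ref{th:main}.

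The main obstacle is the convergence step: one needs quantitative control over how the Parisi PDE solution depends on the degenerating terminal data $\beta^{-1}\log\cosh(\beta\cdot)$ as $\beta\to\8$, together with a priori $L^1_{\rm loc}$-bounds on $\gamma_n$ strong enough to extract subsequential limits and identify them via the variational characterization of $\gamma_P$. Once that convergence is in hand, transferring the ``at most $k$ jumps'' property to the limit is a soft compactness argument in the space of monotone step functions with a bounded number of jumps.
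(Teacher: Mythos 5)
Your proposal is correct and follows essentially the same route as the paper: rescale $\gamma_\beta=\beta\alpha_{P,\beta}$, show it converges to $\gamma_P$ on $[0,1)$ via compactness, lower semicontinuity of the zero-temperature functional along the limit $F_{\beta}\to GSE$, and uniqueness of the minimizer from \cite{CHL16}, then transfer the at-most-$k$-atoms property to $\gamma_P$ and contradict Theorem~\ref{th:main}. The two steps you flag as the main obstacle (tightness and the semicontinuity bound $\px(\gamma_\infty)\le\liminf_l F_{\beta_l}$) are exactly what the paper imports from \cite{AC16} (Equation (16) and Lemma 3 there), so no new idea is missing.
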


For the SK model without external field, $\xi(s)=s^2/2$ and $h=0,$ Aizenman, Lebowitz, and Ruelle \cite{ALR} showed that the model is RS, i.e.~the limiting distribution of $R_{1,2}$ is a Dirac mass if $\beta$ is in the high temperature regime $\beta<1$. Later, it was also understood by Toninelli \cite{Toni} that the model is not RS in the low temperature region $\beta>1$. It is conjectured that the whole region $\beta >1$ is expected to be of FRSB. One may find recent progress on the phase transition from RS to RSB for more general mixtures in \cite{AChen13,JT171,Toni}. However, the existence of FRSB in the mixed $p$-spin models remains unknown.
\vspace{0.1cm}

\subsection{Our Approach}
	The main novelty of our approach to Theorem \ref{th:main} is to explore the Parisi formula for the ground state energy \eqref{eq:par_fml} by considering a  perturbation of the Parisi functional around the point $1$. In short, we show that it is always possible to lower the value of the Parisi functional of any atomic measure with finite atoms by adding a large enough jump near $1$. The argument is quite delicate, and depends on some observations that may look counterintuitive at first glance; see Remark \ref{intuitive} below. At finite temperature, since the Parisi measure is a probability measure, the idea of adding a large jump is not feasible.  
	Theorem \ref{thm:finiteTemp}  follows from Theorem \ref{th:main} after some weak convergence considerations. Recently there have been many papers investigating the properties of the Parisi functional via the stochastic optimal control theory for the Parisi PDE, e.g., \cite{ParisiMeasure,JT17,JT171}. As our analysis here is quite subtle and relies on many identities related to the Gaussian expectations as well as some technical calculations, we prefer adapting the setting and framework as in Talagrand's book \cite{T11} in order to keep track of our control with clarity.

\section{Lowering the value of the Parisi functional}

In this section, we show that for any atomic $\gamma(d s)$ with finitely many jumps, one can always lower the value of the Parisi functional by a perturbation of $\gamma$ around $1$.
Let $\gamma\in \mathcal{U}$ be fixed. Suppose that $\ga(dt)$ is atomic and consists of finitely many jumps, that is,
\begin{align*}
\ga(t) = \sum_{i=0}^{n-1} m_{i} \indi_{[q_{i}, q_{i+1})} (t)+ m_n \indi_{[q_n,1)}(t),
\end{align*}
where $(q_i)_{0\leq i\leq n}$ and $(m_i)_{0\leq i\leq n}$ satisfy
\begin{align}
\begin{split}
\label{eq2}
&0=q_0< q_1<q_2<\cdots<q_n<1,\\
&0\leq m_0<m_1<m_2<\cdots<m_n<\infty.
\end{split}
\end{align}
Here and in what follows, $\indi_B(t)=\indi_{[t\in B]}$ is the indicator function of the set $B\subset \rz$. Let $m_{n+1}$ be any number greater than $m_n.$ For any $q\in (q_n,1),$ consider the following perturbation of $\ga$
\begin{align}\label{pert}
\ga_q(t) = \sum_{i=0}^{n-1} m_{i} \indi_{[q_{i},q_{i+1})}(t) + m_n\indi_{[q_{n}, q)} (t)+ m_{n+1} \indi_{[q, 1)} (t),
\end{align}
where $q_i$'s and $m_i$'s are from \eqref{eq2}.
In other words, we add a jump to the top of $\gamma.$
Our main result is the following theorem. It says that if $m_{n+1}$ is large enough, then the Parisi functional evaluated at perturbed measure $\gamma_q(dt)$ has a smaller value than $\mathcal{P}(\gamma)$ locally for $q$ near $1$.

\begin{theorem}\label{thm3}
	There exist $m_{n+1}>m_n$ and $\eta\in (q_n,1)$ such that
	\begin{align*}
	\mathcal{P}(\gamma_q)<\mathcal{P}(\gamma)
	\end{align*}
	for all $\eta\leq q<1$.	
\end{theorem}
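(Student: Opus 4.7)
The plan is to split
\begin{align*}
\mathcal{P}(\gamma_q) - \mathcal{P}(\gamma) = \bigl[\Psi_{\gamma_q}(0,h) - \Psi_\gamma(0,h)\bigr] - \frac{m_{n+1}-m_n}{2}\int_q^1 t\xi''(t)\,dt
\end{align*}
and argue that, as $q \uparrow 1$, the boundary-value difference falls short of the integral subtraction by a strictly positive quantity of order $(1-q)^{3/2}$. Set $\sigma^2 := \xi'(1) - \xi'(q)$, so $\sigma^2 \sim \xi''(1)(1-q)$. The structural observation is that $\gamma \equiv \gamma_q$ on $[0,q]$, so the difference $D(t,x) := \Psi_{\gamma_q}(t,x) - \Psi_\gamma(t,x)$ satisfies a \emph{linear} backward parabolic equation there, obtained by subtracting the two Parisi PDE's and factoring $(\partial_x\Psi_{\gamma_q})^2-(\partial_x\Psi_\gamma)^2 = (\partial_x\Psi_{\gamma_q}+\partial_x\Psi_\gamma)\partial_x D$. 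By Feynman--Kac,
\begin{align*}
\Psi_{\gamma_q}(0,h) - \Psi_\gamma(0,h) = \e\bigl[\Delta_q(Y_q)\bigr],
\end{align*}
where $Y_t$ is the diffusion with $Y_0=h$, diffusion coefficient $\sqrt{\xi''(t)}$, and bounded drift $\tfrac{\xi''(t)\gamma(t)}{2}(\partial_x\Psi_\gamma+\partial_x\Psi_{\gamma_q})$ (both $\partial_x\Psi$'s bounded by $1$ via Cole--Hopf), and $\Delta_q := D(q,\cdot)$.

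On $[q,1)$ the coefficient $\gamma_q \equiv m_{n+1}$ is constant, so Cole--Hopf yields $\Psi_{\gamma_q}(q,x) = \tfrac{1}{m_{n+1}}\log\e\bigl[e^{m_{n+1}|x+\sigma Z|}\bigr]$ with $Z$ standard normal, and analogously for $\Psi_\gamma(q,\cdot)$ with $m_n$. Writing out the Gaussian convolution of $|x|$ explicitly and Taylor-expanding in $\sigma\to 0$ after $y = \sigma u$ produces
\begin{align*}
f_m(\sigma u) := \tfrac{1}{m}\log\e\bigl[e^{m|\sigma u+\sigma Z|}\bigr] = \tfrac{m\sigma^2}{2} + \sigma\,\tilde h(u) - m\sigma^2\,C(u) + O(\sigma^3),
\end{align*}
where $\tilde h(u) = u(2\Phi(u)-1)+2\phi(u)$ and $C(u) = \tfrac12(\tilde h(u)^2 - u^2)$, with $\phi,\Phi$ the standard normal density and CDF. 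The ``miraculous cancellation'' forced by the $|x|$ terminal data is that $\tilde h$ is \emph{independent of $m$}, so the $O(\sigma)$ term drops out of $\Delta_q = f_{m_{n+1}} - f_{m_n}$; Mills' inequality $\phi(u) > |u|\Phi(-|u|)$ gives $C(u) > 0$ pointwise with $C(0) = 1/\pi$ and exponential decay at infinity, so $I := \int_{\rz} C(u)\,du \in (0,\infty)$.

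Assembling the two pieces, the substitution $y = \sigma u$ in $\e[\Delta_q(Y_q)]$ gives
\begin{align*}
\e[\Delta_q(Y_q)] = \tfrac{(m_{n+1}-m_n)\sigma^2}{2} - (m_{n+1}-m_n)\sigma^3 \int_{\rz} C(u)\,f_{Y_q}(\sigma u)\,du + o(\sigma^3),
\end{align*}
and dominated convergence identifies the integral in the limit $\sigma\to 0$ with $f_{Y_q}(0)\,I$, where $f_{Y_q}(0)$ is bounded below by a positive constant uniformly in $q$ close to $1$ by standard parabolic Green's function estimates. On the other side
\begin{align*}
\tfrac{m_{n+1}-m_n}{2}\int_q^1 t\xi''(t)\,dt = \tfrac{(m_{n+1}-m_n)\sigma^2}{2} - \tfrac{m_{n+1}-m_n}{2}\int_q^1(1-t)\xi''(t)\,dt = \tfrac{(m_{n+1}-m_n)\sigma^2}{2} + O(\sigma^4).
\end{align*}
The leading $\tfrac{(m_{n+1}-m_n)\sigma^2}{2}$ contributions cancel, and
\begin{align*}
\mathcal{P}(\gamma_q) - \mathcal{P}(\gamma) = -(m_{n+1}-m_n)\sigma^3\bigl(f_{Y_q}(0)\,I + o(1)\bigr) + O(\sigma^4) < 0
\end{align*}
for $q$ sufficiently close to $1$, for \emph{any} fixed $m_{n+1} > m_n$.

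The main technical hurdle is to upgrade the pointwise $O(\sigma^3)$ remainder in the expansion of $f_m(\sigma u)$ to an integrable estimate in $u$: the region $|u| = O(1)$ is handled by direct Taylor control, while $|u| \gg 1$ uses the exponential bound $|f_m(x) - |x| - \tfrac{m\sigma^2}{2}| \lesssim e^{-c x^2/\sigma^2}$ inherent in the explicit Gaussian formula. A secondary but essential point is the uniform positive lower bound on $f_{Y_q}(0)$, which follows from the boundedness of the drift ($|\partial_x\Psi|\le 1$) and the uniform positivity of $\sqrt{\xi''(t)}$ on $[0,q_n]$. The whole mechanism hinges on the specific $|x|$ terminal data of the zero-temperature Parisi PDE, which simultaneously forces the $m$-independence of $\tilde h$ and the strict positivity of $C$.
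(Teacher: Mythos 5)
Your proposal is correct in substance and takes a genuinely different route from the paper. The paper never expands $\px(\ga_q)-\px(\ga)$ directly: it differentiates in $q$, derives $\partial_q\px(\ga_q)=\tfrac{\xi''(q)}{2}(m_{n+1}-m_n)\bigl(q-\phi(q)\bigr)$ with $\phi(q)=\e W_0\cdots W_{n-1}\hat{C}(q,Z)$, uses $\phi(q)\to 1$, and bounds $\liminf_{q\to 1-}\phi'(q)$ from below by $\tfrac{2\xi''(1)(m_n+m_{n+1})}{3}\e W_0'\cdots W_{n-1}'\Delta(Z)$, which is pushed above $1$ by taking $m_{n+1}$ large; you instead linearize the two Parisi PDEs on $[0,q]$ (Feynman--Kac with a tilted diffusion whose density at $0$ plays essentially the role of the paper's weight $\e W_0'\cdots W_{n-1}'\Delta(Z)$) and expand the top-layer Cole--Hopf difference around the kink of $|x|$, so that the $\sigma^2$ terms cancel against $\tfrac{m_{n+1}-m_n}{2}\int_q^1 t\xi''(t)\,dt$ and a strictly negative term of order $(m_{n+1}-m_n)\sigma^3 f_{Y_q}(0)\int C$ survives. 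Your coefficients check out: $F'(0)=\tilde h(u)$ is indeed $m$-free, $F''(0)=u^2-\tilde h(u)^2$, and $C>0$ by Mills' ratio, so the mechanism is sound and rests on exactly the same two consequences of the $|x|$ boundary data that drive the paper's cancellations. What your route buys is a stronger conclusion---the inequality holds for \emph{every} fixed $m_{n+1}>m_n$, not only for large ones---and this does not contradict the paper: in Lemma~\ref{add:lem3} the authors discard the $4\Gamma^2$ term from $A_{xx}^2\ge [m'(1-A_x^2)]^2+4m'(1-A_x^2)\Gamma$, a term which in fact diverges as $t\to b-$, so their finite lower bound on $\liminf C_t$ is deliberately lossy and is compensated by choosing $m_{n+1}$ large; what the paper's route buys is that all asymptotics reduce to limits of expectations such as $\e A_x^{k-1}A_{xx}V$ computed by Gaussian integration by parts, so no uniform-in-$u$ remainder control is ever needed. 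Two points you defer do require care, though they are routine: (i) with a fixed cutoff $M$ the tail of your $u$-integral is only $O(\sigma e^{-cM^2})$, which is not $o(\sigma^3)$, so the split between the Taylor region and the exponential-tail region must be taken at $M\sim\sqrt{\log(1/\sigma)}$ (harmless, since the expansion parameter $m_{n+1}\sigma M$ still tends to $0$); and (ii) $\xi''$ may vanish at $t=0$ (when $c_2=0$), so the uniform positive lower bound on $f_{Y_q}(0)$ should be derived from $\xi'(q)>0$ together with the boundedness of the drift (a Girsanov comparison with the driftless Gaussian), rather than from uniform ellipticity on $[0,q_n]$.
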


The following three subsections are devoted to the proof of Theorem \ref{thm3}.

\subsection{Probabilistic representation of $\mathcal P$}
We start by observing that the Parisi functional at $\ga$ admits a probabilistic expression by an application of the Cole-Hopf transformation to the Parisi PDE. Indeed, let $z_0,\ldots,z_n$ be i.i.d. standard Gaussian random variables. For $q_i,i=0,...,n,$ given in \eqref{eq2}, we denote
\begin{align*}
J&=h+\sum_{i=0}^{n-1}z_i\sqrt{\xi'(q_{i+1})-\xi'(q_i)}+z_{n}\sqrt{\xi'(1)-\xi'(q_n)}.
\end{align*}
Set
\begin{align*}
X_{n+1}&=|J|.
\end{align*}
Define iteratively, for $m_i, i=0,..., n,$ given in \eqref{eq2},
\begin{align*}
X_i=\frac{1}{m_i}\log \e_{z_i}\exp m_i X_{i+1}.
\end{align*}
where $\e_{z_i}$ stands for the expectation for $z_i.$ Here $X_0$ is defined as $\e_{z_0} X_1$ if $m_0=0.$ Then by applying the Cole-Hopf transformation, $\Psi_{\ga}(0,h)=X_0$, and thus
	\begin{align*}
	\px(\ga) &= X_0-\frac12\sum_{i=0}^{n-1} m_i \int_{q_i}^{q_{i+1}} t\xi''(t) d t-\frac{m_n}2\int_{q_n}^1 t\xi''(t) d t.
	\end{align*}
Recall the perturbation $\gamma_q$ from \eqref{pert}. Clearly $\gamma_q=\gamma$ on $[0,q)$ for all $q_{n}< q < 1.$ For notational convenience, we denote
\begin{align}\label{eq3}
q_{n+1}=q,\,\,q_{n+2}=1.
\end{align}
In a similar manner, we can express $\Psi_{\gamma_q}(0,h)$ as follows. Let $z_{n+1}$ be a standard Gaussian random variables independent of $z_1,\ldots,z_n.$ Define
\begin{align*}
Y_{n+2}&=\Bigl|h+\sum_{j=0}^{n+1}z_j\sqrt{\xi'(q_{j+1})-\xi'(q_j)}\Bigr|
\end{align*}
and iteratively, for $0\leq i\leq n+1,$
\begin{align}
\label{e:yidef}
Y_{i}&=\frac{1}{m_{i}}\log \e_{z_i} \exp m_{i} Y_{i+1}.
\end{align}
Here again we let $Y_0=\e_{z_0}Y_1$ whenever $m_0=0.$ Thus, $\Psi_{\gamma_q}(0,h)=Y_0$ for any $q\in (q_{n},1).$ As a result,
	\begin{align}
	\label{e:pxgaq}
	\px(\ga_q) &= Y_0-\frac12\sum_{i=0}^{n-1} m_i \int_{q_i}^{q_{i+1}} t\xi''(t) d t-\frac{m_n}{2}\int_{q_n}^qt\xi''(t)d t-\frac{m_{n+1}}2\int_{q}^1 t\xi''(t) d t.
	\end{align}
In particular, we have $\lim_{q\to1-}\Psi_{\gamma_q}(0,h)=\Psi_{\gamma}(0,h)$ and $\lim_{q\to1-}\px(\ga_q)=\mathcal{P}(\gamma).$

\subsection{Some auxiliary lemmas}\label{sub2.1}

We state two propositions that will be heavily used in our main proof in the next subsection.
Let $0\leq a<t<b$ and $0< m<m'.$
Denote by $z$ a standard normal random variable. Define
\begin{align}
\begin{split}\label{add:eq2}
A (t,x)&=\frac{1}{m'}\log \e\exp m'\bigl|x+z\sqrt{b-t}\bigr|,\\
B(t,x)&=\frac{1}{m}\log \e\exp m A (t,x+z\sqrt{t-a}),\\
C(t,x)&=\e \bigl(\partial_xA(t,x+z\sqrt{t-a})\bigr)^2V(t,x,x+z\sqrt{t-a}),
\end{split}
\end{align}
where
\begin{align*}
V(t,x,y)&=e^{m(A (t,y) -B(t,x))}.
\end{align*}
For any $(t,x)\in [a,b)\times\mathbb{R}$, define random variables,
\begin{align*}
\tilde{V}(t,x)&=V(t,x,x +z \sqrt{t-a}),\\
A_t(t,x)&=\partial_tA(t,x+z\sqrt{t-a}),\\
A_{tx}(t,x)&=\partial_{tx}A(t,x+z\sqrt{t-a}),\\
A_{x}(t,x)&=\partial_xA(t,x+z\sqrt{t-a}),\\
A_{xx}(t,x)&=\partial_{xx}A(t,x+z\sqrt{t-a}),\\
A_{xxx}(t,x)&=\partial_{xxx}A(t,x+z\sqrt{t-a}).
\end{align*}
We stress that $A_t(t,x)\neq \partial_t A(t,x), A_x(t,x)\neq \partial_x A(t,x)$, etc.~in our notation. Note that $\e\tilde V(t,x)=1$. The main results of this subsection are the following two propositions.

\begin{proposition}
	\label{add:prop1}
	For any $(t,x)\in[a,b)\times\mathbb{R}$, we have that
	\begin{align}
	\begin{split}
	\label{add:prop1:eq1}
	\partial_tB(t,x)&=\frac{(m-m')}{2}C(t,x)
	\end{split}
	\end{align}
	and
	\begin{align}
	\begin{split}
	\label{add:prop1:eq2}
	\partial_tC(t,x)&=\e  \bigl(A _{xx}(t,x)^2+2(m -m')A _{xx}(t,x)A _x(t,x)^2\bigr)\tilde V(t,x)\\
	&+\frac{(m -m')m }{2}\bigl(\e  A _x(t,x)^4\tilde V(t,x)-\bigl(\e A _x(t,x)^2\tilde V(t,x)\bigr)^2\bigr).
	\end{split}
	\end{align}
\end{proposition}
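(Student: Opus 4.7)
The plan is to exploit the Cole--Hopf transformation in the same spirit that derived the probabilistic representation in Subsection~2.1, but now applied infinitesimally in $t$ so that partial derivatives are controlled by PDEs rather than expectations.

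First I would observe that since $A(t,x) = \frac{1}{m'}\log \mathbb{E} e^{m'|x + z\sqrt{b-t}|}$, the function $W := e^{m'A}$ is the heat kernel smoothing of $e^{m'|\cdot|}$ and hence satisfies $W_t + \tfrac{1}{2}W_{xx}=0$. Taking logs and substituting $W_{xx}/W = m'(A_{xx} + m'A_x^2)$ gives the Parisi-type PDE
\begin{equation*}
A_t = -\tfrac{1}{2}\bigl(A_{xx} + m'A_x^2\bigr), \qquad (t,x)\in[a,b)\times\mathbb{R}.
\end{equation*}
Differentiating this in $x$ also yields $A_{xt} = -\tfrac{1}{2}(A_{xxx} + 2m' A_x A_{xx})$, which will be the key identity replacing the ``grind'' on mixed partials. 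Set $\phi := e^{mA}$ and compute
$\phi_t + \tfrac{1}{2}\phi_{xx} = \tfrac{m(m-m')}{2}A_x^2\,\phi$,
which is the crucial deviation from a pure heat equation caused by $m\ne m'$.

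For \eqref{add:prop1:eq1}, let $\tilde W(t,x):=\mathbb{E}_z \phi(t,x+z\sqrt{t-a}) = e^{mB(t,x)}$. A Gaussian integration by parts in $z$ gives $\mathbb{E}_z[\phi_x \cdot z/(2\sqrt{t-a})] = \tfrac{1}{2}\mathbb{E}_z \phi_{xx}$, hence
\begin{equation*}
\tilde W_t \;=\; \mathbb{E}_z\bigl[\phi_t + \tfrac{1}{2}\phi_{xx}\bigr] \;=\; \tfrac{m(m-m')}{2}\,\mathbb{E}_z\bigl[A_x^2\phi\bigr].
\end{equation*}
Dividing by $m\tilde W$ and recognizing $\phi/\tilde W = V$ yields $B_t = \tfrac{m-m'}{2}\,C$, which is \eqref{add:prop1:eq1}. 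This also records the useful identity $\tilde W_t/\tilde W = \tfrac{m(m-m')}{2}C$, which will feed into the second part.

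For \eqref{add:prop1:eq2}, I would write $C = H/\tilde W$ with $H(t,x) := \mathbb{E}_z G(t,x+z\sqrt{t-a})$ and $G := A_x^2\phi$, so that
$C_t = H_t/\tilde W - C\cdot(\tilde W_t/\tilde W)$.
By the same Gaussian integration by parts, $H_t = \mathbb{E}_z[G_t + \tfrac{1}{2}G_{xx}]$. Then I would compute $G_t$ and $G_{xx}$ using $A_{xt} = -\tfrac{1}{2}(A_{xxx} + 2m' A_x A_{xx})$ and $\phi_t = -\tfrac{1}{2}\phi_{xx} + \tfrac{m(m-m')}{2}A_x^2\phi$. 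The main obstacle—and where the ``miracle'' advertised in the introduction shows up—is the cancellation of the third-derivative terms: the $-A_x A_{xxx}$ produced by $G_t$ (through $2A_x A_{xt}$) must exactly kill the $A_xA_{xxx}$ coming from $\tfrac{1}{2}G_{xx}$. Assuming this cancellation goes through, collecting the remaining monomials yields
\begin{equation*}
G_t + \tfrac{1}{2}G_{xx} \;=\; \bigl(A_{xx}^2 + 2(m-m')A_{xx}A_x^2 + \tfrac{m(m-m')}{2}A_x^4\bigr)\phi.
\end{equation*}
Dividing by $\tilde W$ converts $\phi/\tilde W$ into $V$, and substituting $\tilde W_t/\tilde W = \tfrac{m(m-m')}{2}C$ from part one converts the second term $-C\cdot \tilde W_t/\tilde W$ into $-\tfrac{m(m-m')}{2}(\mathbb{E} A_x^2 V)^2$. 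Combining with the $\mathbb{E} A_x^4 V$ contribution produces exactly the variance-like expression in \eqref{add:prop1:eq2}. I expect the bookkeeping of coefficients on $A_x^2 A_{xx}$ and $A_x^4$ to be the only delicate point; once the $A_{xxx}$ terms cancel, everything else is a matter of matching $-\tfrac{1}{2}(m A_{xx}+m^2A_x^2)$ from $\phi_{xx}/\phi$ against the products appearing in $G_{xx}$ and the $m(m-m')A_x^2$ source term in $\phi_t$.
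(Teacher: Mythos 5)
Your proposal is correct: I checked the key identities, namely $\phi_t+\tfrac12\phi_{xx}=\tfrac{m(m-m')}{2}A_x^2\phi$ for $\phi=e^{mA}$, the resulting formula $\tilde W_t=\tfrac{m(m-m')}{2}\,\e_z[A_x^2\phi]$ for $\tilde W=e^{mB}$, and the intermediate identity
\begin{equation*}
G_t+\tfrac12 G_{xx}=\bigl(A_{xx}^2+2(m-m')A_{xx}A_x^2+\tfrac{m(m-m')}{2}A_x^4\bigr)\phi,\qquad G=A_x^2\phi,
\end{equation*}
and the coefficients all come out right (the $A_xA_{xxx}$ terms do cancel, the $A_x^2A_{xx}$ coefficient is $-2m'-\tfrac m2+\tfrac{5m}{2}=2(m-m')$, and the $A_x^4$ coefficient is $\tfrac{m(m-m')}{2}$), after which the quotient rule $C_t=H_t/\tilde W-C\,\tilde W_t/\tilde W$ produces exactly the variance term in \eqref{add:prop1:eq2}. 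The route is a repackaging of the paper's argument rather than a fundamentally new one: the paper differentiates $B$ and $C$ directly under the $V$-weighted expectations, splits $C_t=I+II$, and applies Gaussian integration by parts to the $\tfrac{z}{2\sqrt{t-a}}$ terms, with the same ingredients (the Parisi PDE \eqref{PDE}, its $x$-derivative, and the same $A_{xxx}$ cancellation); your unnormalized Cole--Hopf bookkeeping makes the source term $\tfrac{m(m-m')}{2}A_x^2\phi$ and the origin of the variance term (via $-C\,\tilde W_t/\tilde W$, which is the paper's $-B_t$ inside $II$ since $B_t=\tfrac1m\tilde W_t/\tilde W$) slightly more transparent, at the cost of carrying exponentials through the computation. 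The only things you leave implicit are routine: justifying differentiation under the expectation and the Gaussian integration by parts (growth conditions, as the paper notes), and the fact that $W=e^{m'A}$ solves the backward heat equation, which the paper instead verifies by the explicit Gaussian computation in Lemma \ref{add:lem0}; none of these is a gap.
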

\begin{remark}
	The functions \eqref{add:eq2} and the formula \eqref{add:prop1:eq1} also appeared in \cite[Section 14.7]{T11} in a similar manner, where in the exponent of $A$, the author used the random variable $\beta^{-1}\log \cosh(\beta (x+z\sqrt{b-t}))$ instead of $|x+z\sqrt{b-t}|.$
\end{remark}

\begin{proposition}
	\label{add:prop2} For $(t,x)\in [a,b)\times\mathbb{R},$
	we have that
	\begin{align}
	\begin{split}
	\label{add:prop2:eq1}
	\lim_{t\rightarrow b-}C(t,x)&=1
	\end{split}
	\end{align}
	and
	\begin{align*}
	\liminf_{t\rightarrow b-}\partial_tC(t,x)&\geq\frac{2(m+m')}{3}\Delta(x),
	\end{align*}
	where
	\begin{align}\label{add:prop2:eq3}
	\Delta(x)&=\frac{2}{\sqrt{2\pi(b-a)}}\frac{e^{-\frac{x^2}{2(b-a)}}}{\e e^{m |x+z \sqrt{b-a}|}}.
	\end{align}
\end{proposition}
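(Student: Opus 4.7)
The plan is to carry out a direct asymptotic analysis as $s := b - t \to 0^+$, centered on the explicit identity
\begin{align*}
A_{xx}(t,y) = \frac{2 p_{s}(y)}{g(y)} + m'\bigl(1 - A_x^2(t,y)\bigr),
\end{align*}
where $p_s(y) = (2\pi s)^{-1/2} e^{-y^2/(2s)}$ and $g(y) := \e e^{m'|y + z\sqrt{s}|}$. This identity follows by writing $g$ as a Gaussian convolution of $e^{m'|\cdot|}$ and using the distributional relation $\partial_y^2 e^{m'|y|} = 2m'\de_0 + m'^2 e^{m'|y|}$ to get $g'' = 2m' p_s + m'^2 g$. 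The decomposition isolates the boundary layer $|y|\lesssim\sqrt{s}$ where all of the interesting behaviour lives.

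For the first assertion $C(t,x)\to 1$, I would note that $\e V(t,x,y) = 1$ by the definition of $B(t,x)$, while $A_x(t,y)\to\sign(y)$ pointwise for $y\ne 0$ as $s\to 0$. Since $y = x + z\sqrt{t-a}$ has a nondegenerate continuous law and $A_x^2 \le 1$, dominated convergence closes the loop; a uniform majorant for $V$ is obtained from the elementary estimate $\bigl|A(t,y)-|y|\bigr| \le \mathrm{const}\cdot\sqrt{s}$.

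For the $\liminf$ bound, I plug the identity above into \eqref{add:prop1:eq2} and analyze its three summands under the rescaling $y = v\sqrt{s}$. Under this substitution, $p_s(v\sqrt{s}) = \phi(v)/\sqrt{s}$, $g(v\sqrt{s})\to 1$, and $A_x(t,v\sqrt{s})$, a Gibbs average of $\sign(v+z)$ with weights $e^{m'\sqrt{s}|v+z|}\to 1$, converges to $2\Phi(v)-1$; meanwhile the outer factor $V(t,x,v\sqrt{s})\,p_{t-a}(v\sqrt{s}-x)$ converges to $V(b,x,0)\cdot p_{b-a}(x) = \Delta(x)/2$. Expanding $A_{xx}^2$ and discarding the nonnegative diagonal term $\e(2p_s(y)/g(y))^2 V$ (which in fact diverges like $1/\sqrt{s}$, but only contributes positively), the cross term alone gives
\begin{align*}
\liminf_{t\to b-}\e A_{xx}^2 V \;\ge\; \liminf_{t\to b-}\e \frac{4m' p_s(y)}{g(y)}(1-A_x^2)V \;=\; \frac{4m'}{3}\Delta(x),
\end{align*}
after evaluating $\int \phi(v)\cdot 4\Phi(v)(1-\Phi(v))\,dv = 4\int_0^1 u(1-u)\,du = 2/3$ via $u = \Phi(v)$. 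A parallel computation, using $\int \phi(v)(2\Phi(v)-1)^2\,dv = \int_0^1(2u-1)^2\,du = 1/3$, gives $\lim_{t\to b-}\e A_{xx} A_x^2 V = \Delta(x)/3$. The last bracket in \eqref{add:prop1:eq2} tends to $0$ by the same dominated-convergence argument as in the first assertion. Adding the three pieces produces $\frac{4m'}{3}\Delta(x) + 2(m-m')\cdot\frac{\Delta(x)}{3} = \frac{2(m+m')}{3}\Delta(x)$, as claimed.

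The main technical obstacle will be producing uniform-in-$s$ dominating functions so that dominated convergence applies to the rescaled integrals. On the bulk $|v|\lesssim 1$ this is routine once one verifies $g(v\sqrt{s})$ stays bounded away from $0$; for the tails $|v|\to\infty$ one combines $|A_x|\le 1$ with the bound $g(y)\le e^{m'(|y|+\sqrt{s})}$ and the Gaussian decay of $p_s$ (resp.\ $p_{t-a}$) to produce an integrable majorant. Once uniform integrability is established, the rest is elementary integral bookkeeping.
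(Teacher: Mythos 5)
Your proposal is correct and lands on exactly the right constants, but the way you evaluate the singular limits differs from the paper, so a comparison is worthwhile. The skeleton is shared: you start from \eqref{add:prop1:eq2}, you use the same identity $A_{xx}=m'(1-A_x^2)+2\Gamma$ (the paper's \eqref{add:lem2:eq1}; your $2p_s/g$ is precisely $2\Gamma$), you drop the nonnegative singular square term, and you kill both the variance bracket and the first assertion by dominated convergence, which is the paper's \eqref{add:lem0:eq3} with the same exponential majorant for $V$. The genuine divergence is in how the two limits $\lim_{t\to b-}\e\,\Gamma(1-A_x^2)V$ and $\lim_{t\to b-}\e\,\Gamma A_x^2V$ are computed. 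The paper's Lemma \ref{add:lem1} gets $\lim_{t\to b-}\e A_x^{k-1}A_{xx}V=\tfrac1k\Delta(x)$ for odd $k$ by Gaussian integration by parts applied to $D(t,x)=\e\,zA_x^kV$, combined with the exact Gaussian identity of Lemma \ref{lem1}; dominated convergence is then only ever applied to quantities controlled by $|A_x|\le 1$ and the fixed majorant of $V$, so the singularity never has to be integrated directly. You instead blow up the boundary layer $y=v\sqrt{s}$, identify the local profile $A_x(t,v\sqrt{s})\to 2\Phi(v)-1$, and read off the constants from $\int_0^1 4u(1-u)\,du=2/3$ and $\int_0^1(2u-1)^2\,du=1/3$; this is consistent with the paper's $1/k$ since $\int_0^1(2u-1)^{k-1}\,du=1/k$ for odd $k$, and it makes transparent where $\Delta(x)$ comes from: the width-$\sqrt{s}$ layer at $y=0$ times the outer factor, which converges to $\Delta(x)/2$. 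The price of your route is the uniform-integrability work for the rescaled integrals, but your plan for it is sound and does close: $g\ge 1$ kills the denominator, $|A_x|\le1$, $V\le e^{m(|y|+O(\sqrt{s}))}$, the outer Gaussian density stays bounded since $t-a$ is bounded away from $0$, and the standard normal density supplies the tail decay. Two small points to make explicit in a careful write-up: since the coefficient $2(m-m')$ is negative, you need a genuine limit (not merely a liminf) for $\e A_{xx}A_x^2V$ --- your dominated-convergence argument does give this, but it should be stated; and in expanding $A_{xx}^2$ you are discarding both nonnegative terms $(2p_s/g)^2$ and $m'^2(1-A_x^2)^2$, which is harmless for the lower bound but worth saying.
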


Before we turn to the proof of Propositions \ref{add:prop1} and \ref{add:prop2}, we first gather some fundamental properties of the function $A .$

\begin{lemma}
	\label{add:lem0} $A $ is the classical solution to the following PDE with boundary condition $A (b,x)=|x|,$
	\begin{align}\label{PDE}
	\partial_tA(t,x)&=-\frac{1}{2}\bigl(\partial_{xx}A(t,x)+m'\bigl(\partial_xA(t,x)\bigr)^2\bigr)
	\end{align}
	for $(t,x)\in[a,b)\times\mathbb{R}.$ In addition,
	\begin{align}
	\begin{split}\label{add:lem0:eq1}
	&|\partial_xA(t,x)|\leq 1,\,\,(t,x)\in [a,b)\times\mathbb{R},
	\end{split}\\
	\begin{split}\label{add:lem0:eq2}
	&\lim_{t\rightarrow b-}\partial_xA(t,x)=\mbox{\rm sign}(x),\,\,\forall x\in \mathbb{R}\setminus \{0\},
	\end{split}\\
	\begin{split}
	\label{add:lem0:eq3}
	&\lim_{t\rightarrow b-}\e A_x(t,x)^{2k}\tilde V(t,x)=1,\,\,\forall (t,x)\in [a,b)\times\mathbb{R},\;\forall \; k\in \mathbb{N}, \forall \; 0<m<m',
	\end{split}
	\end{align}
	where $\mbox{sign}(x)=1$ if $x>0$ and $=-1$ if $x<0.$
\end{lemma}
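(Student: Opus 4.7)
\noindent The four claims all reduce to the Cole--Hopf substitution. Set
\[
u(t,x):=\exp\bigl(m' A(t,x)\bigr)=\e\exp\bigl(m'|x+z\sqrt{b-t}|\bigr),
\]
which can be written as the convolution of $y\mapsto e^{m'|y|}$ with the Gaussian kernel of variance $b-t$. Standard properties of such convolutions give $u_t+\tfrac12 u_{xx}=0$ on $[a,b)\times\rz$, and a direct computation yields $A_{xx}+m'A_x^2=u_{xx}/(m'u)$, whence the PDE in \eqref{PDE}. The boundary condition $A(b,x)=|x|$ is obtained by letting $\sqrt{b-t}\downarrow 0$ and invoking dominated convergence against the Lipschitz function $y\mapsto e^{m'|y|}$.

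For \eqref{add:lem0:eq1}, differentiating under the integral sign (the integrand is absolutely continuous in $x$ with a locally dominated derivative) produces the Gibbs-type representation
\[
A_x(t,x)=\e\Bigl[\mbox{\rm sign}(x+z\sqrt{b-t})\,\frac{e^{m'|x+z\sqrt{b-t}|}}{u(t,x)}\Bigr],
\]
which is a weighted average of quantities in $[-1,1]$, hence $|A_x|\le 1$. For \eqref{add:lem0:eq2}, fix $x\neq 0$. Almost surely $z\sqrt{b-t}\to 0$ as $t\to b-$, so $\mbox{\rm sign}(x+z\sqrt{b-t})\to \mbox{\rm sign}(x)$ and the Gibbs weight concentrates near $x$; dominated convergence, with the integrable envelope $e^{m'|x|+m'\sqrt{b-a}|z|}$, finishes the argument.

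For \eqref{add:lem0:eq3} the key observation is that $\e V=1$ by the very definition of $B$. As $t\to b-$, the boundary condition gives $A(t,x+z\sqrt{t-a})\to |x+z\sqrt{b-a}|$ and hence $B(t,x)\to \tfrac{1}{m}\log\e e^{m|x+z\sqrt{b-a}|}$; consequently $V\to V_\infty:=e^{m|x+z\sqrt{b-a}|}/\e e^{m|x+z\sqrt{b-a}|}$ almost surely, with $\e V_\infty=1$. Since $x+z\sqrt{b-a}\neq 0$ almost surely, applying \eqref{add:lem0:eq2} pointwise gives $A_x(t,x+z\sqrt{t-a})^{2k}\to 1$ a.s. Writing
\[
1-\e A_x^{2k}V=\e(1-A_x^{2k})V_\infty+\e(1-A_x^{2k})(V-V_\infty),
\]
the first summand tends to $0$ by dominated convergence, while the second is bounded by $\e|V-V_\infty|$, which tends to $0$ by Scheff\'e's lemma applied to the non-negative random variables $V$ and $V_\infty$ of common unit mean.

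The main technical point is the $L^1$-convergence $V\to V_\infty$, which is the only step not handled by a uniform bound on $|A_x|$. Once this is supplied by Scheff\'e (equivalently, the easy half of Vitali's theorem), the rest of Lemma \ref{add:lem0} reduces to routine Gaussian convolution estimates and dominated convergence.
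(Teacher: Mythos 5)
Your proof is correct, but it takes a genuinely different route from the paper's. The paper proves Lemma \ref{add:lem0} by explicit computation: writing $\e e^{m'|x+z\sqrt{b-t}|}=g(t,x)+g(t,-x)$ with $g(t,x)=e^{(b-t){m'}^2/2+m'x}\Phi\bigl(m'\sqrt{b-t}+x/\sqrt{b-t}\bigr)$, it derives closed-form expressions for $A_x$, $A_{xx}$ and $A_t$ (in particular $A_{xx}=m'(1-A_x^2)+2\Gamma$), reads off the PDE, \eqref{add:lem0:eq1} and \eqref{add:lem0:eq2} from these formulas, and gets \eqref{add:lem0:eq3} by dominated convergence applied directly to $A_x^{2k}V$ via a linear-growth bound on $\ln V$. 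You argue softly instead: the Cole--Hopf/heat-semigroup identity for the PDE, the Gibbs-average representation of $A_x$ for \eqref{add:lem0:eq1}--\eqref{add:lem0:eq2}, and for \eqref{add:lem0:eq3} the observation $\e V=\e V_\infty=1$ combined with Scheff\'e's lemma to obtain $\e|V-V_\infty|\to 0$, which neatly avoids producing a dominating function for $V$ uniform in $t$. What the paper's explicit route buys, beyond this lemma, is the identity $A_{xx}=m'(1-A_x^2)+2\Gamma$, which is reused crucially in Lemma \ref{add:lem3}; your argument would not supply it. Two small points to tidy up, neither a real gap: $y\mapsto e^{m'|y|}$ is not Lipschitz (only locally so), though your stated envelope $e^{m'|x|+m'\sqrt{b-a}|z|}$ is exactly what the dominated convergence needs; and in \eqref{add:lem0:eq3} the spatial argument $x+z\sqrt{t-a}$ moves with $t$, so strictly you need $A_x(t,y_t)\to\sign(y_0)$ for $y_t\to y_0\neq 0$, a slightly stronger statement than \eqref{add:lem0:eq2} that follows at once from your Gibbs representation by the same dominated-convergence argument (the paper elides the identical point).
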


\begin{proof}
	Define $$
	g(t,x)=e^{\frac{(b-t){m'}^2}{2}+m'x}\Phi \Bigl(m'\sqrt{b-t}+\frac{x}{\sqrt{b-t}}\Bigr),
	$$
	where $\Phi$ is the cumulative distribution function of a standard normal random variable. Note that a direct computation gives
	\begin{align*}
	\e e^{m'|x+z\sqrt{b-t}|}&=g(t,x)+g(t,-x).
	\end{align*}
	Thus,
	\begin{align*}
	A (t,x)=\frac{1}{m'}\log\bigl(g(t,x)+g(t,-x)\bigr).
	\end{align*}
	From this expression, we can compute that
	\begin{align}
	\begin{split}\label{add:lem0:proof:eq1}
	\partial_xA(t,x)&=\frac{g(t,x)-g(t,-x)}{g(t,x)+g(t,-x)},\\
	\partial_{xx}A(t,x)
	&=m'\Bigl(1-\Bigl(\frac{g(t,x)-g(t,-x)}{g(t,x)+g(t,-x)}\Bigr)^2\Bigr)+2\Gamma(t,x),\\
	\partial_tA(t,x)&=-\frac{m'}{2}-\Gamma(t,x),
	\end{split}
	\end{align}
	where
	\begin{align*}
	\Gamma(t,x):=\frac{1}{\sqrt{2\pi(b-t)}}\frac{e^{-\frac{x^2}{2(b-t)}}}{g(t,x)+g(t,-x)}.
	\end{align*}
	Therefore, these equations together validate \eqref{PDE}. From the first equation, we can also conclude \eqref{add:lem0:eq1} and \eqref{add:lem0:eq2}.	
	Note that $\lim_{t\rightarrow b-}V(t,x,y)=V(b,x,y)=\frac{e^{m|y|}}{\e e^{m|x+z\sqrt{b-a}|}}$ and $\ln V(t,\cdot,\cdot)$ is at most of linear growth. From \eqref{add:lem0:eq1} and \eqref{add:lem0:eq2}, the dominated convergence theorem implies \eqref{add:lem0:eq3}.
	
\end{proof}

\begin{proof}[\bf Proof of Proposition \ref{add:prop1}] To lighten our notation, we will drop the dependence on $(t,x)$ in this proof. Recall that the Gaussian integration by parts states that for a standard normal random variable $z$, $\e zf(z)=\e f'(z)$ for all absolutely continuous functions $f$ satisfying that $\ln|f|$ is at most of linear growth at infinity; see e.g. \cite[Section A.4]{T11}.
	From this formula and the PDE \eqref{PDE},
	\begin{align*}
	\partial_tB&=\e\bigl(A _t+\frac{z}{2\sqrt{t-a}}A _x\bigr)\tilde V\\
	&=\e\bigl(-\frac{1}{2}\bigl(A _{xx} + m' A _x ^2\bigr) +\frac{1}{2}\bigl(A _{xx} + m A _x ^2\bigr)\bigr)\tilde V\\
	&=\frac{m -m'}{2}\e A _{x} ^2\tilde V,
	\end{align*}
	which gives \eqref{add:prop1:eq1}.
	To compute the partial derivative of $C$ in $t$, write $\partial_tC=I+II$
	for
	\begin{align*}
	I&:=2\e  \Bigl(A _{tx}+\frac{z }{2\sqrt{t-a}}A _{xx}\Bigr)A _x\tilde V\\
	II&:=m \e  A _x^2\Bigl(A _t+\frac{z }{2\sqrt{t-a}}A _x-\partial_tB\Bigr)\tilde V.
	\end{align*}
	Here, from \eqref{PDE}, since
	\begin{align*}
	\partial_{tx}A&=-\frac{1}{2}\bigl(\partial_{xxx}A+2m'\partial_{xx}A \cdot\partial_xA\bigr)
	\end{align*}
	for any $(t,x)\in [a,b)\times\mathbb{R},$
	using the Gaussian integration by parts again gives
	\begin{align*}
	I&=2\e  \Bigl(A _{tx}A _x+\frac{1}{2}\bigl(A _{xxx}A _x+A _{xx}^2+m A _{xx}A _x^2\bigr)\Bigr)\tilde V\\
	&=\e  \Bigl(-A _x\bigl(A _{xxx}+2m'A _{xx}A _x\bigr)+\bigl(A _{xxx}A _x+A _{xx}^2+m A _{xx}A _x^2\bigr)\Bigr)\tilde V\\
	&=\e  \bigl(A _{xx}^2+(m -2m')A _{xx}A _x^2\bigr)\tilde V.
	\end{align*}
	In addition, from \eqref{PDE} and the Gaussian integration by parts,
	\begin{align*}
	II&=m \e  \Bigl(-\frac{1}{2}\bigl(A _{xx}A _x^2+m'A _x^4\bigr)+\frac{1}{2}\bigl(3A _{xx}A _x^2+m A _x^4\bigr)-A _x^2\partial_tB\Bigr)\tilde V\\
	&=m \e  A _{xx}A _x^2\tilde V+\frac{(m -m')m }{2}\bigl(\e  A _x^4\tilde V-\bigl(\e A _x^2\tilde V\bigr)^2\bigr).
	\end{align*}
	From these, \eqref{add:prop1:eq2} follows.\end{proof}

To handle the limits in Proposition \ref{add:prop2}, we need two lemmas.

\begin{lemma}\label{add:lem1}
	For any odd $k\geq 1,$ there exists a constant $K$ independent of $t$ such that
	\begin{align}
	\label{add:lem1:eq1}
	\e A _{x}(t,x) ^{k-1}A _{xx}(t,x) \tilde V(t,x)\leq \frac{Ke^{m |x|}}{\sqrt{t-a}}
	\end{align}
	for all $t\in[a,b)$ and $x\in \mathbb{R}$. Moreover,
	\begin{align}\label{add:lem1:eq2}
	\lim_{t\rightarrow b-}\e A _{x}(t,x) ^{k-1}A _{xx}(t,x)\tilde V(t,x) &=\frac{1}{k}\Delta(x),
	\end{align}
	where $\Delta(x)$ is defined in Proposition \ref{add:prop2}.
\end{lemma}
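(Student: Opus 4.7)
The plan is to convert $\mathbb{E} A_x^{k-1} A_{xx} V$ into an expression that involves $A_x$ alone (no second derivative) via the chain rule and Gaussian integration by parts, and then observe a miraculous cancellation in the limit. Observe that by the chain rule
\[
\partial_z \bigl[A_x(t, x+z\sqrt{t-a})\bigr] = \sqrt{t-a}\, A_{xx}(t, x+z\sqrt{t-a}),\qquad \partial_z V = m\sqrt{t-a}\, A_x V,
\]
so $k\sqrt{t-a}\, A_x^{k-1} A_{xx} V = \partial_z(A_x^k V) - m\sqrt{t-a}\, A_x^{k+1} V$. Taking the expectation and using $\mathbb{E} F'(z) = \mathbb{E} z F(z)$ yields the key identity
\begin{equation}\label{pl:keyid}
\mathbb{E} A_x^{k-1} A_{xx} V = \frac{1}{k\sqrt{t-a}}\,\mathbb{E} z A_x^k V - \frac{m}{k}\,\mathbb{E} A_x^{k+1} V.
\end{equation}

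For the bound \eqref{add:lem1:eq1}: the second term on the right of \eqref{pl:keyid} is uniformly bounded because $|A_x|\leq 1$ by \eqref{add:lem0:eq1} and $\mathbb{E} V = 1$ by construction of $B$. For the first term, using $|A_x|^k\leq 1$ and the elementary estimate $A(t,y)\leq |y|+C$ (which follows from the explicit formula in the proof of Lemma \ref{add:lem0}, bounding $g(t,\pm y)\leq e^{(b-t){m'}^2/2}e^{m'|y|}$), one gets $V\leq C' e^{m(|y|-|x|)}$ after inserting $B(t,x)\geq 0$; then
\[
\mathbb{E}|z| V \leq C' e^{m|x|}\,\mathbb{E}|z|e^{m|z|\sqrt{t-a}} \leq K e^{m|x|}
\]
uniformly in $t\in[a,b)$. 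Combining these two bounds gives \eqref{add:lem1:eq1}.

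For the limit \eqref{add:lem1:eq2}: as $t\to b-$, \eqref{add:lem0:eq2} gives $A_x(t, x+z\sqrt{t-a})\to \operatorname{sign}(x+z\sqrt{b-a})$ a.s., and $V\to e^{m|x+z\sqrt{b-a}|}/\mathbb{E} e^{m|x+z\sqrt{b-a}|}$. Since $k$ is odd, $\operatorname{sign}^k=\operatorname{sign}$ and $\operatorname{sign}^{k+1}=1$ (away from $0$), so dominated convergence (justified using the uniform exponential bound on $V$ just obtained) yields $\mathbb{E} A_x^{k+1} V\to 1$ and
\[
\mathbb{E} z A_x^k V \longrightarrow \frac{\mathbb{E}\bigl[z\,\operatorname{sign}(x+z\sqrt{b-a})\,e^{m|x+z\sqrt{b-a}|}\bigr]}{\mathbb{E} e^{m|x+z\sqrt{b-a}|}}.
\]
Applying Gaussian integration by parts to the numerator, where the distributional derivative of $u\mapsto\operatorname{sign}(u)e^{m|u|}$ equals $m e^{m|u|}+2\delta_0(u)$, produces
\[
\mathbb{E}\bigl[z\,\operatorname{sign}(x+z\sqrt{b-a})\,e^{m|x+z\sqrt{b-a}|}\bigr] = m\sqrt{b-a}\,\mathbb{E} e^{m|x+z\sqrt{b-a}|} + \frac{2e^{-x^2/(2(b-a))}}{\sqrt{2\pi}}.
\]
Plugging this into \eqref{pl:keyid} in the limit, the $\frac{m}{k}$ contributions from the two terms exactly cancel, leaving $\frac{1}{k}\Delta(x)$, which is \eqref{add:lem1:eq2}.

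The main obstacle will be making the distributional integration-by-parts step rigorous, since $\operatorname{sign}(u)e^{m|u|}$ is not absolutely continuous; this can be handled either by splitting the Gaussian integral at $u=0$ and integrating by parts on each half-line (the boundary terms at $0$ combine to give the $\delta_0$ contribution $2e^{-x^2/(2(b-a))}/\sqrt{2\pi}$), or by smoothing $\operatorname{sign}$ and passing to the limit. Dominated convergence is justified throughout by $|A_x|\leq 1$ together with the uniform bound $V\leq C' e^{m|z|\sqrt{b-a}}$.
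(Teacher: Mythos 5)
Your proposal is correct and follows essentially the same route as the paper: both introduce $D(t,x)=\e\, z A_x^k V$, apply Gaussian integration by parts to get the identity relating $\e A_x^{k-1}A_{xx}V$ and $\e A_x^{k+1}V$, use $|A_x|\leq 1$, $B\geq 0$ and the exponential bound on $V$ for \eqref{add:lem1:eq1}, and pass to the limit by dominated convergence together with the explicit Gaussian identity (the paper's appendix Lemma \ref{lem1}, proved exactly by the split-at-zero computation you sketch) to obtain \eqref{add:lem1:eq2}. The only blemish is the intermediate claim $V\leq C'e^{m(|y|-|x|)}$, which does not follow from $B\geq 0$ alone (that gives $V\leq C'e^{m|y|}$), but the bound you actually use, $\e|z|V\leq Ke^{m|x|}$, does follow, so the argument is unaffected.
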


\begin{proof}
	Define
	\begin{align*}
	D(t,x)&=\e  z A _x ^{k}(t,x)\tilde V(t,x) .
	\end{align*}
	Note that $|\partial_xA (t,x)|\leq 1$ and $B(t,x) \geq 0$. We have
	\begin{align*}
	V(t,x,y)&=e^{m (A (t,y)-B(t,x))}\leq e^{mA (t,0)+m|y|}.
	\end{align*}
	Using the Gaussian integration by parts, we can write
	\begin{align}\label{add:lem1:proof:eq1}
	D(t,x)&=\sqrt{t-a}\e \bigr(kA _{x}(t,x) ^{k-1}A _{xx}(t,x) +m A _{x}(t,x)^{k+1}\bigl)\tilde V(t,x) .
	\end{align}
	This and the previous inequality together imply \eqref{add:lem1:eq1} since
	\begin{align*}
	k\sqrt{t-a}\e  A _{x}(t,x) ^{k-1}A _{xx}(t,x)\tilde V(t,x) &\leq D(t,x)\leq e^{m A (a,0)+m |x|}\e |z |e^{m |z |\sqrt{b-a}},
	\end{align*}
	where the first inequality used the fact that $k+1$ is even.
	Next, we verify \eqref{add:lem1:eq2}. Note that from \eqref{add:lem0:eq1} and \eqref{add:lem0:eq2}, the dominated convergence theorem implies
	\begin{align*}
	\lim_{t\rightarrow b-}D(t,x)&=\frac{\e  z \mbox{sign}(x+z \sqrt{b-a})e^{m |x+z \sqrt{b-a}|}}{\e e^{m |x+z \sqrt{b-a}|}}=\sqrt{b-a}\bigl(\Delta(x)+m \bigr),
	\end{align*}
	where the second equation used the fact that
	\begin{align}\label{add:lem4}
	\e z \mbox{sign}(x+z \sqrt{b-a})e^{m |x+z \sqrt{b-a}|}&=\frac{2}{\sqrt{2\pi}}e^{-\frac{x^2}{2(b-a)}}+m \sqrt{b-a}\e  e^{m |x+z \sqrt{b-a}|}.
	\end{align}
	See the verification of this equation in Lemma \ref{lem1} in the appendix.
	In addition, since $k+1$ is even, \eqref{add:lem0:eq3} yields
	\begin{align*}
	\lim_{t\rightarrow b-}\e A _{x}(t,x)^{k+1}\tilde V(t,x) &=1.
	\end{align*}
	Thus, from \eqref{add:lem1:proof:eq1} and the last two limits,
	\begin{align*}
	\Delta(x)\sqrt{b-a}+m \sqrt{b-a}&=\lim_{t\rightarrow b-}D(t,x)=\sqrt{b-a}\bigl(k\lim_{t\rightarrow b-}\e A _{x}(t,x) ^{k-1}A _{xx}(t,x)\tilde V(t,x) +m \bigr),
	\end{align*}
	from which \eqref{add:lem1:eq2} follows.
\end{proof}

\begin{lemma}
	\label{add:lem3} We have that
	\begin{align*}
	\liminf_{t\rightarrow b-}\e A _{xx}(t,x)^2\tilde V(t,x)&\geq \frac{4m'}{3}\Delta(x).
	\end{align*}
\end{lemma}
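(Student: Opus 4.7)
The plan is to leverage the explicit decomposition of $A_{xx}$ established inside the proof of Lemma \ref{add:lem0} to reduce everything to quantities that are already controlled by Lemma \ref{add:lem1} and by \eqref{add:lem0:eq3}. Specifically, from \eqref{add:lem0:proof:eq1} we have the pointwise identity
\begin{align*}
A_{xx}(t,x) = m'\bigl(1-A_x(t,x)^2\bigr) + 2\Gamma(t,x),
\end{align*}
in which both $m'(1-A_x^2)$ and $\Gamma$ are non-negative. Squaring gives
\begin{align*}
A_{xx}^2 = m'^2(1-A_x^2)^2 + 4m'(1-A_x^2)\Gamma + 4\Gamma^2,
\end{align*}
so the strategy is to (i) show the expectation of the first term vanishes in the limit, (ii) identify the exact limit of the cross term, and (iii) discard the non-negative $4\Gamma^2$ contribution.

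For (i), I first note that $\e V \equiv 1$ for every $t$, which follows directly from the definition of $B$: $\e V = e^{-mB(t,x)}\,\e\exp mA(t,x+z\sqrt{t-a}) = 1$. Combining this with $\e A_x^{2k}V \to 1$ for $k=1,2$ from \eqref{add:lem0:eq3} yields $\e(1-A_x^2)^2 V = \e V - 2\e A_x^2 V + \e A_x^4 V \to 0$, which will be needed twice below.

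For (ii), I would eliminate $\Gamma$ in favour of $A_{xx}$ using $2\Gamma = A_{xx} - m'(1-A_x^2)$, so that
\begin{align*}
(1-A_x^2)\Gamma = \tfrac{1}{2}(1-A_x^2)A_{xx} - \tfrac{m'}{2}(1-A_x^2)^2.
\end{align*}
The second summand again vanishes in expectation by (i). For the first, Lemma \ref{add:lem1} with $k=1$ gives $\e A_{xx}V\to \Delta(x)$ while $k=3$ gives $\e A_x^2 A_{xx}V\to \Delta(x)/3$, hence
\begin{align*}
\lim_{t\to b-}\e(1-A_x^2)A_{xx}V = \Delta(x) - \tfrac{1}{3}\Delta(x) = \tfrac{2}{3}\Delta(x),
\end{align*}
and therefore $\lim_{t\to b-}\e(1-A_x^2)\Gamma V = \Delta(x)/3$. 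This contributes $4m'\cdot\Delta(x)/3 = \tfrac{4m'}{3}\Delta(x)$ to the liminf. Combining (i), (ii), and the non-negativity $4\e \Gamma^2 V \geq 0$ from (iii) produces exactly the desired bound.

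I do not anticipate a genuine obstacle here: every limit that appears is either $\e A_x^{2k}V \to 1$ or a special case of \eqref{add:lem1:eq2}, and the arithmetic $1-\frac{1}{3}=\frac{2}{3}$ arising from Lemma \ref{add:lem1} with $k=1$ versus $k=3$ is what produces the prefactor $\frac{4m'}{3}$. The only subtle point worth double-checking is the legitimacy of exchanging the limit with the expectation in $\e(1-A_x^2)^2 V\to 0$, but this is already implicitly done within the proof of \eqref{add:lem0:eq3} since $|A_x|\leq 1$ provides uniform control and $V$ is dominated by $\exp(mA(a,0)+m|x+z\sqrt{b-a}|)$, so the same dominated convergence argument applies verbatim.
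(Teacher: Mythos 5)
Your proof is correct and follows essentially the same route as the paper: both rely on the decomposition $A_{xx}=m'(1-A_x^2)+2\Gamma$, discard the nonnegative terms $m'^2(1-A_x^2)^2$ and $4\Gamma^2$, and evaluate the cross term using Lemma \ref{add:lem1} with $k=1,3$ together with \eqref{add:lem0:eq3} and $\e V=1$. The only cosmetic difference is that the paper computes $\lim_{t\to b-}\e\Gamma V=\Delta(x)/2$ and $\lim_{t\to b-}\e A_x^2\Gamma V=\Delta(x)/6$ separately, whereas you obtain $\e(1-A_x^2)\Gamma V\to\Delta(x)/3$ directly; these are the same ingredients rearranged.
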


\begin{proof}
	Recall the middle equation of \eqref{add:lem0:proof:eq1}. We see that
	\begin{align}
	\label{add:lem2:eq1}
	A _{xx}(t,x)&=m'(1-A _x(t,x)^2)+2\tilde\Gamma(t,x),
	\end{align}
	on $[a,b)\times\mathbb{R},$ where $\tilde\Gamma(t,x):=\Gamma(t, x+z\sqrt{b-a})$. Using \eqref{add:lem0:eq3}, \eqref{add:lem2:eq1}, and Lemma \ref{add:lem1} with $k=1$ gives
	\begin{align*}
	\lim_{t\rightarrow b-}\e\tilde\Gamma(t,x) \tilde V(t,x)&=\frac12\Delta(x).
	\end{align*}
	Also multiplying both sides of \eqref{add:lem2:eq1} by $A _x(t,x)^2$ and applying \eqref{add:lem0:eq3} and Lemma \ref{add:lem1} with $k=3$ yield
	\begin{align*}
	\lim_{t\rightarrow b-}\e A_{x}(t,x)^{2}\tilde\Gamma(t,x)\tilde  V(t,x)=	    	 \frac{1}{6}\Delta(x).
	\end{align*}
	From \eqref{add:lem2:eq1}, since
	\begin{align*}
	A _{xx}(t,x)^2&=\bigl(m'(1-A _x(t,x)^2)+2\tilde\Gamma(t,x)\bigr)^2\\
	&\geq [m'(1-A _x(t,x)^2)]^2+4m' (1-A _x(t,x)^2)\tilde\Gamma(t,x),
	\end{align*}
	the announced result follows by the last two limits.    	 	
\end{proof}

\begin{proof}[\bf Proof of Proposition \ref{add:prop2}]
	The statement \eqref{add:prop2:eq1} follows from \eqref{add:eq2} and \eqref{add:lem0:eq3}. From \eqref{add:prop1:eq2}, \eqref{add:lem0:eq3}, Lemma \ref{add:lem3}, and \eqref{add:lem1:eq2}, we find
	\begin{align*}
	\liminf_{t\rightarrow b-}\partial_tC(t,x)&\geq \liminf_{t\rightarrow b-}\e \bigl(A _{xx}(t,x)^2+2(m-m')A _{xx}(t,x)A _x(t,x)^2\bigr)\tilde V(t,x)\\
	&\geq \liminf_{t\rightarrow b-}\e A _{xx}(t,x)^2\tilde V(t,x)+2(m-m')\lim_{t\rightarrow b-}\e A _{xx}(t,x)A _x(t,x)^2 \tilde V(t,x)\\
	&=\frac{4m'}{3}\Delta(x)+\frac{2(m-m')}{3}\Delta(x)\\
	&=\frac{2(m+m')}{3}\Delta(x),
	\end{align*}
	which completes the proof.
\end{proof}

\subsection{Proof of Theorem \ref{thm3}}

Recall the sequences $(q_i)_{0\leq i\leq n+2}$ and $(m_i)_{0\leq i\leq n+1}$ from \eqref{eq2} and \eqref{eq3}. Recall the quantities $m,m',a,b$ and the functions $A,B,C,V$ from \eqref{add:eq2}. From now on, we take
\begin{align*}
m&=m_n,\,\,m'=m_{n+1},\\
a&=\xi'(q_n),\,\,b=\xi'(1),
\end{align*}
and let
\begin{align*}
\hat{A}(q,x)&=A(\xi'(q),x),\\
\hat{B}(q,x)&=B(\xi'(q),x),\\
\hat{C}(q,x)&=C(\xi'(q),x),\\
\hat{V}(q,x,y)&=V(\xi'(q),x,y).
\end{align*}
For $0\leq i\leq n$, set
\[
W_i=\exp m_i(Y_{i+1}-Y_i),
\]
where $Y_i$'s are given in \eqref{e:yidef}. Denote
\begin{align*}
Z&=h+\sum_{j=0}^{n-1}z_j\sqrt{\xi'(q_{j+1})-\xi'(q_j)},
\end{align*}
and
	\begin{align*}
	\phi(q)&=\e W_0\cdots W_{n-1}\hat{C}(q,Z).
	\end{align*}

\begin{lemma}\label{lem2}
	We have that
	\begin{align}
	\label{lem2:eq1}
	\partial_q\mathcal{P}(\gamma_q)=\frac{\xi''(q)}{2}(m_{n+1}-m_n)\bigl(q-\phi(q)\bigr)
	\end{align}
	and
	\begin{align}
	\begin{split}\label{lem2:eq2}
	\phi'(q)&=\frac{\xi''(q)(m_{n}-m_{n+1})}{2}\sum_{i=0}^{n-1}m_i\e\Bigl[ W_0\cdots W_i\bigl(\e_{i+1}\bigl[W_{i+1}\cdots W_{n-1}\hat{C}(q,Z)\bigr]\bigr)^2\Bigr]\\
	&-\frac{\xi''(q)(m_{n}-m_{n+1})}{2}\sum_{i=0}^{n-1}m_i\e \Bigl[W_0\cdots W_{i}\e_{z_i}\bigl[W_i\bigl(\e_{i+1}\bigl[W_{i+1}\cdots W_{n-1}\hat{C}(q,Z)\bigr]\bigr)^2\bigr]\Bigr]\\
	&+\e W_0\cdots W_{n-1}\hat{C}_q(q,Z),
	\end{split}
	\end{align}
	where  $\e_i$ is the expectation with respect to $z_i,\ldots,z_{n-1}$ and $\hat{C}_q$ is the partial derivative with respect to $q.$
\end{lemma}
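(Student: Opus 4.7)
The plan is to establish both identities by direct differentiation, leveraging the recursive Cole--Hopf structure that defines the $Y_i$'s and invoking Proposition \ref{add:prop1} at the terminal step $j=n$.

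First I would tackle \eqref{lem2:eq1}. Differentiating \eqref{e:pxgaq} in $q$, the three integral contributions collapse to $\frac{q\xi''(q)(m_{n+1}-m_n)}{2}$, so the entire problem reduces to computing $\partial_q Y_0$. From $Y_j=\frac{1}{m_j}\log\mathbb{E}_{z_j}\exp m_j Y_{j+1}$, a routine chain rule gives $\partial_q Y_j = \mathbb{E}_{z_j}[W_j\,\partial_q Y_{j+1}]$; iterating for $j=0,\ldots,n-1$ yields $\partial_q Y_0 = \mathbb{E}[W_0\cdots W_{n-1}\,\partial_q Y_n]$. Since $Y_n = \hat B(q,Z)$, the chain rule gives $\partial_q Y_n = \xi''(q)\,B_t(\xi'(q),Z)$, and the first identity of Proposition \ref{add:prop1}, specialized to $m=m_n$, $m'=m_{n+1}$, $a=\xi'(q_n)$, $b=\xi'(1)$, rewrites this as $\partial_q Y_n = \frac{\xi''(q)(m_n-m_{n+1})}{2}\hat C(q,Z)$. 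Thus $\partial_q Y_0 = \frac{\xi''(q)(m_n-m_{n+1})}{2}\phi(q)$, and adding the integral derivative produces \eqref{lem2:eq1}.

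For \eqref{lem2:eq2} I would differentiate $\phi(q) = \mathbb{E}[W_0\cdots W_{n-1}\hat C(q,Z)]$ by the product rule. The boundary term $\mathbb{E}[W_0\cdots W_{n-1}\hat C_q(q,Z)]$ comes out directly; the remaining $n$ contributions have the form $\mathbb{E}[W_0\cdots(\partial_q W_i)\cdots W_{n-1}\hat C(q,Z)]$, into which I substitute $\partial_q W_i = m_i W_i(\partial_q Y_{i+1}-\partial_q Y_i)$. Reusing the iteration from the first part, $\partial_q Y_{i+1} = c\,G_i$ where $G_i := \mathbb{E}_{i+1}[W_{i+1}\cdots W_{n-1}\hat C(q,Z)]$ depends only on $z_0,\ldots,z_i$, while $\partial_q Y_i = c\,\mathbb{E}_{z_i}[W_i G_i]$ depends only on $z_0,\ldots,z_{i-1}$ (with $c=\tfrac{\xi''(q)(m_n-m_{n+1})}{2}$). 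Applying the tower property twice---first identifying $\mathbb{E}_{i+1}[W_{i+1}\cdots W_{n-1}\hat C(q,Z)]=G_i$ to collapse the factors past index $i$, and then exploiting that $\mathbb{E}_{z_i}[W_i G_i]$ is independent of $z_i$---separates each of the $n$ pieces into a ``diagonal'' term $\mathbb{E}[W_0\cdots W_i G_i^2]$ and a ``squared-expectation'' term, matching the two sums in \eqref{lem2:eq2}.

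The main obstacle is organizational rather than technical: keeping track of which $z_j$'s every factor depends on, justifying the interchange of $\partial_q$ with Gaussian expectations (which follows from the smoothness of $A,B,C$ guaranteed by Lemma \ref{add:lem0} and the uniform bound $|A_x|\leq 1$ together with standard dominated convergence), and systematically applying the tower property at each level of nesting so that the nested conditional expectations simplify into the precise form claimed. Beyond this bookkeeping the computation is mechanical.
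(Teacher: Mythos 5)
Your proposal is correct and follows essentially the same route as the paper: the recursion $\partial_q Y_i=\e_{z_i}[W_i\,\partial_q Y_{i+1}]$ combined with \eqref{add:prop1:eq1} (via $\partial_q Y_n=\xi''(q)\,B_t(\xi'(q),Z)$) gives \eqref{lem2:eq1}, and the product rule for $\phi$ with $\partial_q W_i=m_iW_i(\partial_q Y_{i+1}-\partial_q Y_i)$ followed by the tower property gives \eqref{lem2:eq2}. Your ``squared-expectation'' term $\e\bigl[W_0\cdots W_{i-1}\bigl(\e_{z_i}\bigl[W_i\,\e_{i+1}[W_{i+1}\cdots W_{n-1}\hat{C}(q,Z)]\bigr]\bigr)^2\bigr]$ is exactly the quantity the second line of \eqref{lem2:eq2} is meant to denote, so the two computations agree.
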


\begin{proof}
	Observe that for $0\leq i\leq n-1$,
	\begin{align*}
	\partial_qY_i&=\e_{z_i} W_i \partial_q Y_{i+1}.
	\end{align*}
	An induction argument yields
	\begin{align*}
	\partial_qY_i&=\e_i W_i\cdots W_{n-1}\partial_qY_{n}
	\end{align*}
	for $0\leq i\leq n-1$.
	Since $Y_{n}=\hat{B}(q,Z),$
	the equation \eqref{add:prop1:eq1} leads to
	\begin{align}\label{add:eq3}
	\partial_qY_i&=\frac{\xi''(q)(m_{n}-m_{n+1})}{2}\e_i W_i\cdots W_{n-1}\hat{C}(q,Z).
	\end{align}
	From \eqref{e:pxgaq}, since
	\begin{align*}
	\partial_q\mathcal{P}(\gamma_q)&=\partial_qY_0+\frac{q \xi''(q)}{2}(m_{n+1}-m_n),
	\end{align*}
    this and \eqref{add:eq3} with $i=0$ yield \eqref{lem2:eq1}. On the other hand, for $0\leq i\leq n-1,$ from \eqref{add:eq3},
	\begin{align*}
	\partial_qW_i&=m_i\bigl(\partial_qY_{i+1}-\partial_qY_i\bigr)W_i\\
	&=\frac{\xi''(q)}{2}(m_{n}-m_{n+1})m_iW_i\bigl(\e_{i+1}W_{i+1}\cdots W_{n-1}\hat{C}(q,Z)-\e_iW_i\cdots W_{n-1}\hat{C}(q,Z)\bigr)
	\end{align*}
    where $\e_{i+1}W_{i+1}\cdots W_{n-1}\hat{C}(q,Z)=\hat{C}(q,Z)$ if $i=n-1.$
	Finally, since
	\begin{align*}
	\phi'(q)&=\sum_{i=0}^{n-1}\e W_0\cdots W_{i-1}(\partial_qW_i)W_{i+1}\cdots W_{n-1}\hat{C}(q,Z)+\e W_0\cdots W_{n-1}\hat{C}_q(q,Z),
	\end{align*}
	plugging the last equation into this derivative yields \eqref{lem2:eq2}.
\end{proof}

\begin{proof}[\bf Proof of Theorem \ref{thm3}]
Recall $\phi'(q)$ from \eqref{lem2:eq2}. Let $\tilde W_0,\ldots, \tilde W_{n-1}$ be $W_0,\ldots, W_{n-1}$ evaluated at $q=1.$ Note that $\e_{z_i}W_i=1$ for all $0\leq i\leq n-1$, $\lim_{q\to 1-}\hat C(q,Z)=1$ by \eqref{add:prop2:eq1} and $|\hat{C}(q,Z)|\leq 1$ by \eqref{add:lem0:eq1}. Applying Fatou's lemma and conditional expectation yields that the first two lines of \eqref{lem2:eq2} cancel each other and as a result of Proposition \ref{add:prop2},
\begin{align}
\begin{split}\label{thm3:proof:eq1}
\liminf_{q\rightarrow 1-}\phi'(q)&=\liminf_{q\rightarrow 1-}\e W_0\cdots W_{n-1}\hat{C}_q(q,Z)\\
&\geq \e \tilde W_0 \cdots \tilde W_{n-1} \liminf_{q\rightarrow 1-}\hat{C}_q(q,Z)\\
&\geq \frac{2\xi''(1)(m_n+m_{n+1})}{3}\e \tilde W_0 \cdots \tilde W_{n-1} \Delta(Z),
\end{split}
\end{align}
where $\Delta(Z)$ is defined through \eqref{add:prop2:eq3} with $a=\xi'(q_n),$ $b=\xi'(1)$, and $m=m_{n}.$ We emphasize that although we do not know whether $\hat{C}_q$ is nonnegative (see \eqref{add:prop1:eq2}), the use of Fatou's lemma remains justifiable. Indeed, note that $|\hat{A}_x|\leq 1$, $\e_{z_n}\hat{V}=1$, and by \eqref{add:lem1:eq1},
\begin{align*}
0&\leq \e_{z_n}\hat{A}_{xx}(q,Z)\hat{A}_{x}^2(q,Z)\hat{V}(q,Z,Z+z_n\sqrt{\xi'(q)-\xi'(q_n)})\leq \frac{Ke^{m_{n} |Z|}}{\sqrt{\xi'(q)-\xi'(q_n)}},
\end{align*}
where $K$ is a constant independent of $q.$ From \eqref{add:prop1:eq2}, $$
\hat{C}_q(q,Z)\geq -(m_{n+1}-m_{n})\xi''(q)\Bigl(\frac{2Ke^{m_{n} |Z|}}{\sqrt{\xi'(q)-\xi'(q_n)}}+\frac{m_n}{2}\Bigr).
$$
In addition, it can be shown that each $\ln\bigl( W_0W_1\cdots W_{n-1}\bigr)$ is at most of linear growth in $z_0,\ldots,z_{n-1}$ following from the fact that each $Y_i$ is uniformly Lipschitz in the variable $z_i$ for all $q\in [q_n,1].$ This and the last inequality together validate \eqref{thm3:proof:eq1}.

Next, from \eqref{thm3:proof:eq1}, we can choose $m_{n+1}$ large enough in the beginning such that
$$
\liminf_{q\rightarrow 1-}\phi'(q)>1.
$$
Note that $\lim_{q\rightarrow 1-}\phi(q)=1$. From \eqref{lem2:eq1}, the above inequality implies that $\partial_q\mathcal{P}(\gamma_q)>0$ for our choice of $m_{n+1}$ as long as $q$ is sufficiently close to $1.$ This completes our proof.

\end{proof}

\begin{remark}\label{intuitive}
The validity of \eqref{thm3:proof:eq1} and Theorem \ref{thm3} relies on the positive lower bound of $\partial_tC$ coming from Proposition \ref{add:prop2}. When one looks at \eqref{add:prop1:eq2} together with the fact $\lim_{t\to b-}(\partial_xA)^2=1$, it is tempting to think that $\partial_tC$ is actually negative since $m'=m_{n+1}$ is taken to be large. As a result, Proposition \ref{add:prop2} may look counterintuitive. The remedy for this puzzle is the fact that $\partial_{xx}A(t,x)$ is singular in the limit $t\to b-$ and the dominated convergence theorem does not apply. These ``singular expectations'' are one of the major difficulties to prove Theorem~\ref{thm3}.  They are handled by the exact computations coming from Lemmas \ref{add:lem1} and \ref{add:lem3}.
\end{remark}

\begin{remark}
In the  work \cite{Toni}, Toninelli proved that in the SK model, the Parisi formula for the free energy possesses RSB Parisi measure when the temperature stays below the de Almeida-Thouless transition line. In view of the argument therein, he considered the first derivative of the Parisi functional $\mathcal{P}_\beta(\gamma_q)$ in the variable $m_{n+1}$ and then sent $m_{n+1}\to 1$ for $n=0$ to obtain a quantity $K(\beta,h,q)$. He then concluded the proof by calculating the first and second derivatives of $K(\beta,h,q)$ w.r.t.~$q$ near $\bar q$, where $\bar q$ is the unique solution of $\bar q = \int d\mu(z) \tanh^2(z\beta\sqrt{\bar q}+\beta h)$ for $z\thicksim N(0,1)$. In our situation, the maximum value of $m_{n+1}$ is unclear since $\ga$ is not a probability measure. Our argument considers the first and second derivatives of the perturbed Parisi functional $\px(\ga_q)$ in $q$ and evaluated at $q=1$, which will not work for the positive temperature case because  $\alpha_{P,\beta}([0,x])$ is less than or equal to one for  $x$ being close to $1$.
\end{remark}

\section{Proofs of main results}

\begin{proof}[\bf Proof of Theorem \ref{th:main}]
We prove Theorem \ref{th:main} by contradiction. First, note that it is known by \cite[Theorem 6]{CHL16} that the Parisi measure $\ga_P$ is not constantly zero. Suppose that the support of $\gamma_P$ consists of only $n\geq 1$ points. Then from Theorem \ref{thm3}, we can lower the value of the Parisi functional by a perturbation of $\gamma_P$ at $1$ defined in \eqref{pert}. This leads to a contradiction of the minimality of $\mathcal{P}(\gamma_P).$ Hence, the support of $\gamma_P$ must contain infinitely many points.
\end{proof}

\begin{remark}
The statement of Theorem \ref{th:main} can be strengthened to the fact that the Parisi measure $\ga_P$ cannot be ``flat'' near 1, i.e., $\ga_P(t)<\ga_P(1-)$ for any $0<t<1$. In fact, if this is not true, then $\gamma_P$ is a constant function on $[a,1)$ for some $a.$ One can then apply essentially the same argument as Proposition~\ref{thm3} to lower the Parisi functional. The only difference is that since $\gamma_P$ is not necessarily a step function on $[0,a),$ the term $W_1\cdots W_{n-1}$ in Lemma \ref{lem2} have to be replaced by a continuous modification using the optimal stochastic control representation for $\Psi_\gamma$ in \cite{CHL16}. We omit the details of the argument.
\end{remark}

\begin{remark}\rm
	Our argument of Theorem \ref{th:main} does not rely on the uniqueness of the Parisi measure. All we need is the existence of \emph{a} Parisi measure which was proved in \cite{AC16}.
\end{remark}

\begin{proof}[\bf Proof of Proposition \ref{cor1}] For any $u\in [-1,1]$ and $\varepsilon>0,$ consider the coupled free energy and the maximum energy
	\begin{align*}
	{C\!F}_{N,\beta}(u,\varepsilon):=\frac{1}{\beta N}\log \sum_{|R_{1,2}-u|<\varepsilon}e^{\beta H_N(\sigma^1)+\beta H_N(\sigma^2)}
	\end{align*}
	and
	\begin{align*}
	{C\!M}_N(u,\varepsilon):=\frac{1}{N}\max_{|R_{1,2}-u|<\varepsilon}\bigl(H_N(\sigma^1)+H_N(\sigma^2)\bigr).
	\end{align*}
	First we claim that if $u$ is in the support of the Parisi measure $\alpha_{P,\beta}(dt)$, then for any $\varepsilon>0,$
	\begin{align}\label{e:ecfbt} \lim_{N\rightarrow\infty}\e{C\!F}_{N,\beta}(u,\varepsilon)=2F_\beta.
	\end{align}
	Note that the expectation of the free energy $F_{N,\beta}$ and the coupled free free energy $C\!F_{N,\beta}$ are Lipschitz functions in the mixture parameters $(c_p)_{p\geq 2}$ with respect to the $\ell_2(\mathbb{R})$-norm. In addition, the Parisi measure is continuous in $\beta$ as well as the mixture parameters $(c_p)_{p\geq 2}$ (see, e.g., \cite{AC161}). From these, we may assume without loss of generality that $c_p>0$ for all $p\geq 2.$ This condition guarantees that the Parisi measure $\alpha_{P,\beta}$ is the limiting distribution of the overlap $R_{1,2}$ between two i.i.d.~samplings $\sigma^1$ and $\sigma^2$ from $G_{N},$ see \cite{P13,Tal06c}. Namely, we have
\[
\lim_{N\to+\8} \e G_N^{\otimes2}(R_{1,2}\in B) = \al_{P,\beta}(B)
\]
for any Borel set $B\subset [0,1]$. To see why the claim \eqref{e:ecfbt} holds, observe that if there exists a positive $\delta$ such that
	\begin{align*}
	\limsup_{N\rightarrow\infty}\e C\!F_{N,\beta}(u,\varepsilon)\leq 2F_\beta-\delta,
	\end{align*}
	it follows by the Gaussian concentration of measure that there exists a constant $K>0$ such that for any $N\geq 1,$ with probability at least $1-Ke^{-N/K}$,
	\begin{align*}
	C\!F_{N,\beta}(u,\varepsilon)\leq 2 F_{N,\beta}-\frac{\delta}{2}.
	\end{align*}
	Multiplying $N$ and taking exponential lead to
	\begin{align*}
	\e G_{N}^{\otimes 2}\bigl(R_{1,2}\in (u-\varepsilon,u+\varepsilon)\bigr)\leq Ke^{-N/K}+e^{-N\beta \delta/2},
	\end{align*}
	which implies, by the weak convergence of $R_{1,2}$,
	\begin{align*}
	\int_{(u-\varepsilon,u+\varepsilon)} \alpha_{P,\beta}(dt)=0
	\end{align*}
	This contradicts the fact that $u$ is in the support of the Parisi measure and completes the proof of our claim.
	Next, note that evidently
	\begin{align*}
	\e C\!M_{N}(u,\varepsilon)\leq \e{C\!F}_{N,\beta}(u,\varepsilon)\leq \frac{\log 4}{\beta}+\e C\!M_{N}(u,\varepsilon).
	\end{align*}
    This together with our claim implies that
    \begin{align*}
    \lim_{N\rightarrow\infty}\e C\!M_{N}(u,\varepsilon)=2\lim_{\beta\rightarrow\infty}F_\beta=2GSE.
    \end{align*}	
From the Gaussian concentration of measure inequality, for any $\eta>0,$ there exists a constant $K'>0$ such that for any $N\geq 1$ with probability at least $1-K'e^{-N/{K'}}$,
    \begin{align*}
    C\!M_{N}(u,\varepsilon)\geq 2GSE-\frac{\eta}{2}
    \end{align*}
    and
    \begin{align*}
    \max_{\sigma\in\Sigma_N}\frac{H_N(\sigma)}{N}\leq GSE+\frac{\eta}{4}.
    \end{align*}
    From the first inequality, there exist $\sigma^1$ and $\sigma^2$ with $|R_{1,2}-u|<\varepsilon$ and
    \begin{align*}
    \frac{1}{N}\bigl(H_N(\sigma^1)+H_N(\sigma^2)\bigr)\geq 2GSE-\frac{\eta}{2}.
    \end{align*}
    If either $H_N(\sigma^1)<GSE-\eta$ or $H_N(\sigma^2)<GSE-\eta$, then we arrive at a contradiction,
    \begin{align*}
 2GSE-\frac{\eta}{2}\leq\frac{1}{N}\bigl(H_N(\sigma^1)+H_N(\sigma^2)\bigr)\leq GSE-\eta+GSE+\frac{\eta}{4}=2GSE-\frac{3\eta}{4}.
    \end{align*}
    Therefore, the inequality \eqref{eq4} must hold and this finishes our proof.	
\end{proof}

\begin{proof}[\bf Proof of Theorem \ref{thm:finiteTemp}]
Recall the Parisi measure $\alpha_{P,\beta}$ for the free energy from \eqref{eq0}. We first claim that $(\beta\alpha_{P,\beta})_{\beta>0}$ converges to $\gamma_P$ vaguely on $[0,1).$ Suppose there exists an infinite sequence $(\beta_l)_{l\geq 1}$ such that $(\beta_l\alpha_{P,\beta_l})_{l\geq 1}$ does not converge to $\gamma_P$ vaguely on $[0,1).$ By an identical argument as \cite[Equation (16)]{AC16}, we can further pass to a subsequence of $(\beta_l\alpha_{P,\beta_l})_{l\geq 1}$ such that it vaguely converges to some $\gamma$ on $[0,1).$ To ease our notation, we use $(\beta_l\alpha_{P,\beta_l})_{l\geq 1}$ to standard for this subsequence. It was established in \cite[Lemma 3]{AC16} that
\begin{align*}
\lim_{l\rightarrow\infty}F_{\beta_l}\geq \mathcal{P}(\gamma).
\end{align*}
From this,
\begin{align*}
\mathcal{P}(\gamma_P)=\lim_{\beta\rightarrow\infty}F_\beta\geq \mathcal{P}(\gamma).
\end{align*}
From  the uniqueness of $\gamma_P$ established in \cite[Theorem 4]{CHL16}, it follows that $\gamma_P=\gamma,$ a contradiction. Thus, $(\beta\alpha_{P,\beta})_{\beta>0}$ converges to $\gamma_P$ vaguely on $[0,1).$ This completes the proof of our claim.

Next, if Theorem \ref{thm:finiteTemp} does not hold, then from the above claim, there exists some $k\geq 1$ such that the support of $\alpha_{P,\beta}$ contains at most $k$ points for all sufficiently large $\beta$. This implies that the support of $\gamma_P$ contains at most $k$ points. This contradicts Theorem \ref{th:main}.
\end{proof}

        

\appendix

\section{Appendix}

Denote by $\Phi(x)$ the c.d.f. of the standard normal distribution. The following lemma follows a standard Gaussian computation and is used in \eqref{add:lem4}.

\begin{lemma}\label{lem1}
	Suppose that $z$ is a standard normal random variable. For any $x,m\in\mathbb{R}$ and $a>0,$
	\begin{align*}
	\e ze^{m|x+az|}\sign(x+az)&=\sqrt{\frac{2}{\pi}}e^{-\frac{x^2}{2a^2}}+ma\e e^{m|x+az|}.
	\end{align*}
\end{lemma}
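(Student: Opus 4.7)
The plan is to prove this identity by a direct computation, splitting the Gaussian integral at the point $z_0 = -x/a$ where $x+az$ changes sign. Writing $\varphi$ for the standard normal density, the left-hand side becomes
\[
\int_{-x/a}^{\infty} z\, e^{m(x+az)} \varphi(z)\, dz \;-\; \int_{-\infty}^{-x/a} z\, e^{-m(x+az)}\varphi(z)\, dz.
\]
In each piece I would complete the square using $\pm m(x+az) - z^2/2 = -\tfrac12(z \mp ma)^2 + \tfrac{m^2 a^2}{2} \pm mx$, then substitute $u = z \mp ma$. Each integrand reduces to $(u \pm ma)\,\varphi(u)$ over a shifted half-line, multiplied by the prefactor $e^{m^2 a^2/2 \pm mx}$.

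Separating each of these into two summands, the $\pm ma$ pieces recombine into the moment generating functions of the two half-Gaussians, which by definition reassemble to $ma\,\e e^{m|x+az|}$. For the $u\,\varphi(u)$ pieces I would use the primitive $\int u\,\varphi(u)\,du = -\varphi(u)$ to evaluate them as boundary contributions at $u = -x/a \mp ma$. A short calculation shows that the Gaussian factor $\varphi$ at that boundary exactly cancels the prefactor $e^{m^2 a^2/2 \pm mx}$, leaving a common contribution $\tfrac{1}{\sqrt{2\pi}} e^{-x^2/(2a^2)}$ from each half-line; adding them produces $\sqrt{2/\pi}\,e^{-x^2/(2a^2)}$, as desired.

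A conceptually cleaner alternative is Gaussian integration by parts applied in the distributional sense to $f(z) = e^{m|x+az|}\sign(x+az)$. The map $y\mapsto e^{m|y|}\sign(y)$ has classical derivative $m\,e^{m|y|}$ away from $y=0$ and jumps by $+2$ at the origin, so its distributional derivative equals $m\,e^{m|y|} + 2\delta(y)$. By the chain rule, $f'(z) = a m\, e^{m|x+az|} + 2\,\delta(z+x/a)$, and $\e z f(z) = \e f'(z)$ produces the claim at once, with the Dirac mass contributing $2\,\varphi(x/a) = \sqrt{2/\pi}\,e^{-x^2/(2a^2)}$ and the smooth part contributing $ma\,\e e^{m|x+az|}$. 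There is no genuine obstacle here — the paper flags the lemma as standard and it is invoked only to justify \eqref{add:lem4} — the single point requiring care is correctly accounting for the jump of $\sign$ at the origin, which is precisely the source of the $\sqrt{2/\pi}$ factor.
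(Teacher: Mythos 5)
Your main computation is correct and is essentially the paper's own proof: the paper likewise splits at the sign change and completes the square, the only cosmetic difference being that it evaluates the positive half-line explicitly and obtains the negative half by the symmetry $z\mapsto -z$, whereas you treat both halves directly; the boundary term $\varphi$ cancelling the prefactor $e^{m^2a^2/2\pm mx}$ is exactly the mechanism behind the paper's $e^{-x^2/(2a^2)}/\sqrt{2\pi}$ terms. Your distributional integration-by-parts aside is also a valid (and slightly slicker) alternative, with the jump of $y\mapsto e^{m|y|}\sign(y)$ at $0$ correctly producing the $2\varphi(x/a)=\sqrt{2/\pi}\,e^{-x^2/(2a^2)}$ contribution; it just requires noting that the Gaussian integration-by-parts identity extends to such functions of bounded variation, which the elementary computation avoids.
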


\begin{proof}
    Define
	$$
	f(y)=e^{m y}\Phi\Bigl(\frac{y}{a}+m a\Bigr),
	$$
	for $y\in\mathbb{R},$ where $\Phi(x)$ is the c.d.f. of the standard Gaussian random variable.
	Note that
	\begin{align*}
	amz-\frac{z^2}{2}&=-\frac{1}{2}(z-m a)^2+\frac{m ^2a^2}{2}.
	\end{align*}
	Computing directly gives
	\begin{align*}
	\e ze^{m (x+az)}\indi_{\{x+az>0\}}&=\frac{e^{m x}}{\sqrt{2\pi}}\int_{-x/a}^{\infty}ze^{am z-\frac{z^2}{2}}dz\\
	&=\frac{e^{m x+\frac{m ^2a^2}{2}}}{\sqrt{2\pi}}\int_{-x/a}^{\infty}(z-m a)e^{-\frac{(z-m a)^2}{2}}dz+\frac{m ae^{m x+\frac{m ^2a^2}{2}}}{\sqrt{2\pi}}\int_{-x/a}^{\infty}e^{-\frac{(z-m a)^2}{2}}dz\\
	&=\frac{e^{-\frac{x^2}{2a^2}}}{\sqrt{2\pi}}+m ae^{\frac{m ^2a^2}{2}}f(x).
	\end{align*}	
	On the other hand, since $z':=-z$ is a standard Gaussian random variable, we may apply the above formula to obtain
	\begin{align*}
	\e ze^{-m (x+az)}\indi_{\{x+az<0\}}&=-\e (-z)e^{m ((-x)+a(-z))}\indi_{\{-x+a(-z)>0\}}\\
	&=-\e z'e^{m ((-x)+az')}\indi_{\{-x+az'>0\}}\\
	&=-\frac{e^{-\frac{x^2}{2a^2}}}{\sqrt{2\pi}}-m ae^{\frac{m ^2a^2}{2}}f(-x).
	\end{align*}
	Combining these two equations together leads to
	\begin{align*}
	\e ze^{m (x+az)}\indi_{\{x+az>0\}}-\e ze^{-m (x+az)}\indi_{\{x+az<0\}}&=\sqrt{\frac{2}{\pi}}e^{-\frac{x^2}{2a^2}}+m ae^{\frac{m ^2a^2}{2}}\bigl(f(x)+f(-x)\bigr).
	\end{align*}
	Here, note that
	\begin{align*}
	\e e^{m |x+az|}&=e^{\frac{m ^2a^2}{2}+xm }\Phi\Bigl(\frac{x}{a}+am \Bigr)+e^{\frac{m ^2a^2}{2}-xm }\Phi\Bigl(-\frac{x}{a}+am \Bigr)=
	e^{\frac{m ^2a^2}{2}}\bigl(f(x)+f(-x)\bigr).
	\end{align*}
	This and the last equation together imply the announced result.
\end{proof}

\end{document}